\newcommand{\rmnum}[1]{\romannumeral #1}
\newcommand{\Rmnum}[1]{\expandafter\@slowromancap\romannumeral #1@}
\newtheorem{theorem}{Theorem}
\newtheorem{corollary}{Corollary}
\newtheorem{proposition}{Proposition}
\newtheorem{lemma}{Lemma}
\newtheorem{remark}{Remark}
\newtheorem{definition}{Definition}
\def \ord{\textup{ord}}
\def \col{\textup{col}}
\def \sfp{\textup{sfp}}
\def \diag{\textup{diag}}
\def \T{\textup{T}}
\def \nullity{\textup{nullity}}
\def\rank{\textup{rank}}
\def\nullity{\textup{nullity}}
\def \rrg{\textup{RO}_n(\mathbb{Q})}
\begin{document}
	\title{Primary decomposition theorem and generalized spectral characterization of graphs}
\author{\small	Songlin Guo$^{{\rm a}}$ \quad Wei Wang$^{{\rm a}}$\thanks{Corresponding author. Email address: wangwei.math@gmail.com}\quad Wei Wang$^{{\rm b}}$
	\\
{\footnotesize$^{\rm a}$School of Mathematics, Physics and Finance, Anhui Polytechnic University, Wuhu 241000, P. R. China}\\
{\footnotesize$^{\rm b}$School of Mathematics and Statistics, Xi'an Jiaotong University, Xi'an 710049, P. R. China}
}
\date{}

\maketitle
\begin{abstract}
	 Suppose $G$ is a controllable graph of order $n$ with adjacency matrix $A$. Let  $W=[e,Ae,\ldots,A^{n-1}e]$ ($e$ is the all-one vector) and $\Delta=\prod_{i>j}(\alpha_i-\alpha_j)^2$ ($\alpha_i$'s are eigenvalues of $A$) be the walk matrix and the discriminant of $G$, respectively. Wang and Yu \cite{wangyu2016} showed that if 
	 $$\theta(G):=\gcd\{2^{-\lfloor\frac{n}{2}\rfloor}\det W,\Delta\} $$
 is odd and squarefree, then $G$ is determined by its generalized spectrum (DGS). Using the primary decomposition theorem, we obtain a new criterion for a graph $G$ to be DGS without the squarefreeness assumption on $\theta(G)$. Examples are further given to illustrate the effectiveness of the proposed criterion, compared with the two existing methods to deal with the difficulty of non-squarefreeness.
	
\end{abstract}

	\noindent\textbf{Keywords:} Generalized spectrum; Controllable graph; Totally isotropic; Rational orthogonal matrix. \\
	
	\noindent\textbf{Mathematics Subject Classification:} 05C50

\section{Introduction}
Let $G$ be a simple graph with vertex set $\{v_1,\ldots,v_n\}$. The \emph{adjacency matrix} of $G$ is the symmetric (0,1)-matrix $A=A(G)=(a_{i,j})$, where $a_{i,j}=1$ precisely when $v_i$ and $v_j$ are adjacent. We often identify a graph $G$ with its adjacency matrix $A$.  The \emph{spectrum} of a graph $G$, denoted by $\sigma(G)$,  is the multiset of eigenvalues of $A$.  The \emph{generalized spectrum} of $G$ is the ordered pair $(\sigma(G),\sigma(\bar{G}))$,
where $\bar{G}$ is the complement of $G$. Two graphs are \emph{generalized cospectral} if they share the same generalized spectrum; a graph $G$ is \emph{determined by its generalized spectrum} (DGS) if any graph generalized cospectral with $G$ is isomorphic to $G$.

The problem of determining whether a given graph is DGS has received considerable attention in recent years \cite{abiad,qiu,wang2013EJC,wang2017JCTB,wang2023Eujc}. Some sufficient conditions for DGS-property  have been obtained based on the walk matrix and the discriminant of a graph. For a graph $G$ with adjacency matrix $A$, the \emph{walk matrix} of $G$ is the square integral matrix $$W=W(G)=[e,Ae,\ldots,A^{n-1}e],$$ where $e$ is the all-one vector. The \emph{discriminant} \cite{wangyu2016} of $G$, denoted by $\Delta(G)$ or $\Delta(A)$,  is defined to be the discriminant of its characteristic polynomial $\det(xI-A)$. That is, $$\Delta(G)=\prod_{i>j}(\alpha_i-\alpha_j)^2,$$ where $\alpha_1,\ldots,\alpha_n$ are eigenvalues of $G$. Note that $\Delta(G)$ is always an even integer \cite{wangyu2016}. For the walk matrix $W$, it is known that $2^{-\lfloor\frac{n}{2}\rfloor}\det W$ is always an integer \cite{wang2013EJC}. For convenience, we introduce an integral invariant by combining $\det W(G)$ and $\Delta(G)$:
 \begin{definition}\normalfont{
	For an $n$-vertex graph $G$, we define 
	\begin{equation}
	\theta(G)=\gcd\left\{2^{-\lfloor\frac{n}{2}\rfloor }\det W(G),\Delta(G)\right\},
	\end{equation}
	where $\gcd\{\cdot,\cdot\}$ denotes the greatest common divisor of two integers. }
\end{definition}
\begin{remark}
	$\theta(G)$ is invariant under generalized cospectrality. That is, if $H$ is generalized cospectral with $G$, then 	$\theta(G)=\theta(H)$.
\end{remark}

A graph $G$ is \emph{controllable} \cite{godsil2012} if $\det W\neq 0$.  O'Rourke and Touri \cite{rourke} showed that almost all graphs are controllable, confirming a conjecture of Godsil \cite{godsil2012}.  We use $\mathcal{G}_n^c$ to denote the set of all controllable graphs of order $n$ and we only consider such graphs in this paper. We mention that a controllable graph can only have simple eigenvalues and hence has a nonzero discriminant. 
 
 The following arithmetic criterion for DGS-property was reported in \cite{wangyu2016}, improving the previous results obtained in \cite{wang2013EJC} and \cite{wang2017JCTB}.
\begin{theorem}[\cite{wang2013EJC, wang2017JCTB, wangyu2016}]\label{old}
Let $G\in \mathcal{G}_n^c$. If $\theta(G)$ is odd and squarefree then $G$ is DGS.
\end{theorem}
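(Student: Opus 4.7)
The plan is to argue by contradiction, assuming $H$ is generalized cospectral with $G$ but $H\not\cong G$, and to derive a prime that must appear with multiplicity at least two in $\theta(G)$, contradicting squarefreeness. The central object will be a rational orthogonal transformation supplied by generalized cospectrality.

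First, by the classical theorem of Johnson and Newman, the generalized cospectrality of $G$ and $H$ yields a matrix $Q\in\rrg$ with $Qe=e$ and $Q^{T}A(G)Q=A(H)$. Define the \emph{level} $\ell(Q)$ to be the least positive integer such that $\ell Q$ is integral. Since $Qe=e$ forces any signed permutation $Q$ to be an ordinary permutation, the non-isomorphism $H\not\cong G$ is equivalent to $\ell(Q)>1$, so it suffices to derive a contradiction from this assumption.

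Next, I would establish two divisibilities that together place $\ell(Q)$ inside $\theta(G)$. From $Qe=e$ together with $Q^{T}A^{k}Q=A(H)^{k}$ one obtains $Q^{T}W(G)=W(H)$, and a Smith normal form analysis of $W(G)$ over $\mathbb{Z}$, combined with the integrality of $2^{-\lfloor n/2\rfloor}\det W(G)$ from \cite{wang2013EJC}, yields both $\ell(Q)\mid 2^{-\lfloor n/2\rfloor}\det W(G)$ and that $\ell(Q)$ is odd. For each odd prime $p\mid\ell(Q)$, reducing the identity $(\ell Q)^{T}(\ell Q)=\ell^{2}I$ modulo $p$ produces a nonzero column $v\in\mathbb{F}_{p}^{n}$ of $\ell Q\bmod p$ satisfying $e^{T}v\equiv 0$ and $v^{T}A^{j}v\equiv 0\pmod{p}$ for every $j\geq 0$. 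Such an isotropic walk vector, combined with controllability, then forces $p\mid\Delta(G)$. Together the two divisibilities give $\ell(Q)\mid\theta(G)$.

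The final and most delicate step is to promote ``$p\mid\theta(G)$'' to ``$p^{2}\mid\theta(G)$'' for every odd prime $p\mid\ell(Q)$; this is precisely where the squarefreeness assumption is consumed. The plan is a second-order analysis: writing $\ell Q=pM+R$ and reading $Q^{T}Q=I$ modulo $p^{2}$ produces a higher-order orthogonality constraint that upgrades the isolated isotropic walk vector from the previous step into a structured $A$-invariant totally isotropic subspace. This subspace then contributes either an additional factor of $p$ to the relevant invariant factor of $W(G)$ or a repeated irreducible factor mod $p$ of the characteristic polynomial, in either case yielding $p^{2}\mid\theta(G)$ and the desired contradiction. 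The main obstacle is exactly this bootstrap; it is the technical bottleneck that forces the squarefree assumption, and, as the abstract indicates, the contribution of the present paper is to replace it by an argument based on the primary decomposition theorem.
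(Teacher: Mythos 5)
Your overall architecture is the right one and matches how the cited works (and this paper's Lemma \ref{best}) organize the argument: reduce to showing $\ell(Q)=1$ for the unique $Q\in\rrg$ with $Q^\T A(G)Q=A(H)$, then show that any odd prime $p\mid\ell(Q)$ would force $p^2\mid\theta(G)$. The first two stages of your sketch are essentially sound: level-one regular rational orthogonal matrices are permutations; $\ell\mid\det W$ and $\ell$ odd follow from the Smith normal form analysis (Lemma \ref{best} (\rmnum{1}), (\rmnum{5})); and your isotropic walk vector argument for $p\mid\Delta$ is correct in spirit --- the column space of $\ell Q \bmod p$ is a nonzero, totally isotropic, $A$-invariant subspace, which by Lemma \ref{pfts} cannot exist when $\chi(A;x)$ is squarefree over $\mathbb{F}_p$, i.e.\ when $p\nmid\Delta$.

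The genuine gap is the last step, which is the entire content of the theorem, and you leave it as an asserted ``bootstrap'' rather than a proof. Two concrete problems. First, your conclusion is an \emph{either/or}: the mod-$p^2$ analysis ``contributes either an additional factor of $p$ to the relevant invariant factor of $W(G)$ or a repeated irreducible factor mod $p$ of the characteristic polynomial.'' Since $\theta(G)$ is a $\gcd$, contradicting its squarefreeness requires $p^2\mid 2^{-\lfloor n/2\rfloor}\det W$ \emph{and} $p^2\mid\Delta$ simultaneously; these are two separate statements (items (\rmnum{2}) and (\rmnum{4}) of Lemma \ref{best}) with genuinely different proofs, and a disjunction between them proves nothing. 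Note in particular that a repeated irreducible factor of $\chi(A;x)$ mod $p$ only gives $p\mid\Delta$, not $p^2\mid\Delta$, so the second branch of your disjunction does not even reach the stated target. Second, the actual mechanisms are never exhibited: for $p^2\mid\det W$ one must produce a nontrivial solution of $W^\T z\equiv 0\pmod{p^2}$ and invoke the Smith-normal-form criterion (Lemma \ref{snfbasic} (\rmnum{3})), and the $\Delta$ half is a separate, harder computation with the discriminant; writing $\ell Q=pM+R$ and ``reading $Q^\T Q=I$ modulo $p^2$'' does not by itself produce either. Finally, your closing remark misreads the paper: the primary decomposition argument here is not a replacement for the proof of this theorem in the squarefree case, but the tool that removes the squarefreeness hypothesis altogether (Theorem \ref{main}).
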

The main aim of this paper is to give a new criterion for DGS-property without the  squarefreeness assumption on $\theta(G)$. We only assume $\theta(G)$ to be odd and allow $\theta(G)$ to contain one or more multiple prime factors. Let $p$ be such a factor, which is of course a multiple factor of $\det W$.  Indeed, some criteria (sufficient conditions) for DGS-property have already been obtained in this regard. One is  the Exclusion Condition \cite{wang2006EUJC} obtained under the assumption that $\rank_p W=n-1$; the other is the Improved Condition \cite{wang2023Eujc} obtained under the assumption that the last invariant factor $d_n(W)$ of $W$ is squarefree.

We emphasize, however, that neither of the above two additional assumptions is needed in this paper. To state our result compactly, we need some notations, the first of which was introduced in \cite{wang2023Eujc}.
\begin{definition}[\cite{wang2023Eujc}]\label{Phip}
	\normalfont{
	Let $p$ be an odd prime and $G$ be a graph with adjacency matrix $A$. We define	
	\begin{equation}\Phi_p(G;x)=\gcd\left\{\chi(A;x),\chi(A+J;x)\right\}\in \mathbb{F}_p[x],
	\end{equation}
	where $J$ is the all-one matrix,  $\chi(A;x)=\det(xI-A)$ is the characteristic polynomial of $A$, and the  greatest common divisor (gcd) is taken over $\mathbb{F}_p$.}
\end{definition}
For an integer $m$ and a prime $p$, we define 
\begin{equation}
\ord_p(m)=\begin{cases}
\max\{k\colon\,k\ge 0 \text{~and~} p^k\mid m\} &\text{if $m\neq 0$,}\\
\infty&\text{if $m=0$.}
\end{cases}
\end{equation} The main result of this paper is the following theorem.
\begin{theorem}\label{main}
 Let $G\in \mathcal{G}_n^c$ such that $\theta(G)$ is odd.
If for each multiple prime factor $p$ of $\theta(G)$ and for each multiple irreducible factor $\phi(x)$ of $\Phi_p(G;x)$,
\begin{equation}\label{keyequ}
\min\{\ord_p(\det \phi(A)),\ord_p(\det \phi(A+J))\}=\deg \phi(x)
\end{equation}
then $G$ is DGS.	
\end{theorem}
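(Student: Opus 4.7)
The plan is to work within the standard Wang--Xu framework: a graph $G\in\mathcal{G}_n^c$ is DGS if and only if every regular rational orthogonal matrix $Q$ (so $Q^TQ=I$ and $Qe=e$) with $Q^TA(G)Q$ a symmetric $(0,1)$-matrix is a permutation matrix. Writing $\ell=\ell(Q)$ for the \emph{level} (the least positive integer with $\ell Q\in\mathbb{Z}^{n\times n}$), this reduces to showing $\ell=1$. Results in \cite{wang2013EJC,wang2017JCTB,wangyu2016} yield that, under $\theta(G)$ odd, $\ell$ is odd and every prime $p\mid\ell$ divides $\theta(G)$; moreover the proof of Theorem~\ref{old} already excludes simple prime factors of $\theta(G)$ from dividing $\ell$. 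Thus the remaining task is to rule out the case in which a multiple prime factor $p\mid\theta(G)$ divides $\ell$ for some non-permutation $Q$.

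Assume for contradiction that such a $p$ and $Q$ exist, and set $\tilde Q=\ell Q\in\mathbb{Z}^{n\times n}$. Minimality of $\ell$ forces the entries of $\tilde Q$ to have gcd $1$, so $\tilde Q\not\equiv 0\pmod p$. Reducing the identities $\tilde Q^T\tilde Q=\ell^2 I$, $A\tilde Q=\tilde Q B$ (with $B=Q^TAQ$), $\tilde Q e=\ell e$, $e^T\tilde Q=\ell e^T$, and $J=ee^T$ modulo $p$, the subspace $V=\operatorname{im}(\tilde Q\bmod p)\subseteq\mathbb{F}_p^n$ is nonzero, totally isotropic (as $\tilde Q^T\tilde Q\equiv 0$), and invariant under both $A$ and $A+J$ (as $J\tilde Q\equiv 0$). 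The objective becomes deriving $V=0$, the sought contradiction.

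Apply the primary decomposition theorem to $\mathbb{F}_p^n$ viewed as an $\mathbb{F}_p[x]$-module with $x$ acting via $A$:
\[
\mathbb{F}_p^n=\bigoplus_{\phi}V_\phi(A),
\]
summed over the distinct monic irreducibles $\phi\mid\chi(A;x)\bmod p$. Symmetry of $A$ makes the primary components pairwise orthogonal in the standard bilinear form, so $V=\bigoplus_\phi(V\cap V_\phi(A))$ with each summand totally isotropic and $A$-invariant. Because $\tilde Q$ intertwines $A$ and $B$, a short check gives $V\cap V_\phi(A)=\tilde Q(V_\phi(B))$. The parallel decomposition under $A+J$ shows that any $\phi$ contributing nontrivially to $V$ divides both $\chi(A;x)$ and $\chi(A+J;x)$ modulo $p$, hence $\phi\mid\Phi_p(G;x)$. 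When $\phi$ is a simple factor of $\Phi_p$, a direct adaptation of the Wang--Yu argument of \cite{wangyu2016} already forces $V\cap V_\phi(A)=0$. What survives is a multiple irreducible factor $\phi\mid\Phi_p$, precisely where the theorem's hypothesis must be invoked.

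For such $\phi$, assume without loss of generality $\ord_p(\det\phi(A))=\deg\phi$ (the case $\ord_p(\det\phi(A+J))=\deg\phi$ is handled symmetrically, with $A+J$ replacing $A$ throughout). The general inequality $\ord_p(\det\phi(A))\geq\deg\phi\cdot k$, where $k$ is the number of $\phi$-blocks in the primary decomposition of $A\bmod p$, combined with $\phi^2\mid\chi(A)\bmod p$ (so $k\geq 1$) and the given equality, forces $k=1$. Therefore $V_\phi(A)\cong\mathbb{F}_p[x]/(\phi^e)$ is cyclic with $e\geq 2$, and moreover the $p$-adic Smith normal form of $\phi(A)$ has only $p^1$ among its non-unit invariant factors. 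The main obstacle of the proof is to convert this pair of structural facts---cyclicity of $V_\phi(A)$ together with the refined $p$-adic invariant factor data---in conjunction with the totally isotropic condition coming from $\tilde Q^T\tilde Q\equiv 0$ and the intertwining $\tilde Q(V_\phi(B))=V\cap V_\phi(A)$, into the vanishing $V\cap V_\phi(A)=0$. Cyclicity by itself is insufficient (cyclic $A$-modules admit nontrivial totally isotropic $A$-invariant submodules in general), so the argument must genuinely exploit the $p^1$-refinement by lifting to $\mathbb{Z}_p$ and tracking how the relation $\tilde Q^T\tilde Q=\ell^2 I$ interacts with the Smith form data; this is the most delicate step, and is the technical heart of the theorem.
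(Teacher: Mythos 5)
Your setup is exactly the paper's: reduce to the level $\ell$ of a putative non-permutation $Q\in\mathcal{Q}(G)$, pass to $\tilde Q=\ell Q$ modulo an odd prime $p\mid\ell$, observe that $V=\operatorname{col}(\tilde Q)$ is nonzero, totally isotropic and invariant under both $A$ and $A+J$, decompose $V$ along the primary components, kill the simple factors, and land on a multiple irreducible factor $\phi$ of $\Phi_p(G;x)$ with $V\cap\ker\phi(A)\neq 0$. Your reduction to the case where $\phi$ contributes a single block (equivalently $\dim\ker\phi(A)=\deg\phi$, so that $\ker\phi(A)\subseteq V$) is also the same first move the paper makes inside its Proposition 2.

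However, the proposal has a genuine gap precisely at the step you yourself flag as ``the technical heart of the theorem'': you state that cyclicity plus the $p$-adic invariant-factor data \emph{must somehow} be converted into $V\cap V_\phi(A)=0$, but you do not supply that argument, and without it the hypothesis \eqref{keyequ} is never actually used to produce a contradiction. The paper closes this gap with a concrete $p^2$-computation rather than an abstract module-theoretic one. Since $\ker\phi(A)\subseteq\operatorname{col}(\tilde Q)$, any integral $\xi,\eta$ with $\phi(A)\xi\equiv\phi(A)\eta\equiv 0\pmod p$ can be written $\xi=\tilde Qu+p\alpha$, $\eta=\tilde Qv+p\beta$; expanding $\xi^{\T}\phi(A)\eta$ and using $\tilde Q^{\T}\phi(A)\tilde Q=\ell^2\,Q^{\T}\phi(A)Q\equiv 0\pmod{p^2}$ (because $Q^{\T}A^mQ$ is integral and $p\mid\ell$) together with $\phi(A)\tilde Qv\equiv 0\pmod p$ gives $\xi^{\T}\phi(A)\eta\equiv 0\pmod{p^2}$. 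Feeding this into a rank count over $\mathbb{F}_p$ produces an integral vector $\eta-p\gamma\not\equiv 0\pmod p$ with $\phi(A)(\eta-p\gamma)\equiv 0\pmod{p^2}$, which by the Smith-normal-form criterion forces $p^2\mid d_n(\phi(A))$; combined with the $s=\deg\phi$ trailing invariant factors already divisible by $p$, this yields $\ord_p(\det\phi(A))\ge\deg\phi+1$, and the same for $A+J$, contradicting \eqref{keyequ}. So the missing step is not a lifting to $\mathbb{Z}_p$ of the module structure but a direct second-order divisibility argument on $\phi(A)$ itself; your write-up identifies where the work lies but does not do it, so as it stands the proof is incomplete.
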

\begin{remark}
	In the left hand side of Eq.~\eqref{keyequ}, we should regard $\phi(x)\in\mathbb{Z}[x]$ with each coefficient in $\{0,1,\ldots,p-1\}$. We shall make the same convention when determinants are concerned, e.g., in Eqs.~\eqref{basicine} and \eqref{relex}.
\end{remark}
The following simplified criterion for DGS-property is immediate from Theorem \ref{main}.
\begin{corollary}
 Let $G\in \mathcal{G}_n^c$ such that $\theta(G)$ is odd. If for each multiple prime factor $p$ of $\theta(G)$, $\Phi_p(G;x)$ is squarefree, i.e.,
  \begin{equation}\gcd\left\{\Phi_p(G;x),\Phi'_p(G;x)\right\}=1,\text{~over~}\mathbb{F}_p, 
 \end{equation}
 then $G$ is DGS.	
\end{corollary}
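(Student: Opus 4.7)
The plan is to deduce the corollary as an immediate specialization of Theorem \ref{main}. The key observation is that the hypothesis asserting squarefreeness of $\Phi_p(G;x)$ is precisely the statement that $\Phi_p(G;x)$ has no multiple irreducible factor in $\mathbb{F}_p[x]$, so the set over which the condition \eqref{keyequ} must be checked is empty for each relevant $p$.

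First, I would justify the equivalent reformulation of squarefreeness appearing in the statement. For any $f(x)\in\mathbb{F}_p[x]$, the standard criterion from the theory of polynomials over a field gives: $f$ is squarefree (i.e., has no repeated irreducible factor in $\mathbb{F}_p[x]$) if and only if $\gcd(f,f')=1$ in $\mathbb{F}_p[x]$. Since $\mathbb{F}_p[x]$ is a unique factorization domain and any repeated irreducible factor $\phi(x)$ contributes $\phi(x)$ to $\gcd(f,f')$, while squarefreeness forces $\gcd(f,f')=1$, the equivalence is standard. Applying this to $f=\Phi_p(G;x)$ legitimates the displayed gcd formula as the working hypothesis.

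Next, I would apply Theorem \ref{main}. That theorem requires that for every multiple prime factor $p$ of $\theta(G)$ and every multiple irreducible factor $\phi(x)$ of $\Phi_p(G;x)$, the equation \eqref{keyequ} holds. Under the corollary's hypothesis, for each such $p$ the polynomial $\Phi_p(G;x)$ is squarefree, so it admits \emph{no} multiple irreducible factor $\phi(x)$. The inner universal quantifier in the hypothesis of Theorem \ref{main} therefore ranges over the empty set, and the required condition is vacuously satisfied for every multiple prime factor $p$ of $\theta(G)$.

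Consequently all hypotheses of Theorem \ref{main} are met, and we conclude that $G$ is DGS. There is no real obstacle here: the corollary is a pure logical specialization of Theorem \ref{main}, combined only with the elementary characterization of squarefree polynomials via the derivative. The substance of the corollary lies entirely in Theorem \ref{main} itself; the corollary's role is to package a readily verifiable sufficient condition that avoids computing the $p$-adic valuations in \eqref{keyequ}.
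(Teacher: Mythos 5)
Your proof is correct and matches the paper's intent exactly: the paper states the corollary is ``immediate from Theorem \ref{main}'', and your observation that squarefreeness of $\Phi_p(G;x)$ makes the quantifier over multiple irreducible factors range over the empty set, rendering Eq.~\eqref{keyequ} vacuously satisfied, is precisely that argument. The auxiliary justification of the derivative criterion for squarefreeness is standard and correctly handled.
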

\section{Preliminaries}
\subsection{Inner product space over $\mathbb{F}_p$}
Let $p$ be an odd prime. We use $\mathbb{F}_p^n$ to denote the standard $n$-dimentional linear space $\{(x_1,x_2,\ldots,x_n)^\T\colon\, x_i\in \mathbb{F}_p\}$ equipped with the standard inner product: $\langle u,v \rangle=u^\T v$. If $u$ and $v$ are two vectors in  $\mathbb{F}_p^n$ satisfying $u^\T v=0$, we shall say $u,v$ are \emph{orthogonal} and denote the relation by the ordinary notation $u\perp v$. Similarly, two subspaces $U$ and $V$ are \emph{orthogonal}, denoted by $U\perp V$, if $u\perp v$ for any $u\in U$ and  $v\in V$.  A nonzero vector $u\in \mathbb{F}_p^n$ is \emph{isotropic} if $u\perp u$, i.e., $u^\T u=0$.  For a subspace $V$ of $\mathbb{F}_p^n$, the \emph{orthogonal space} of $V$ is
\begin{equation}
V^\perp=\{u\in \mathbb{F}_p^n\colon\, v^\T u=0\text{~for every $v\in V$}\}.
\end{equation}
A subspace $V$ of $\mathbb{F}_p^n$ is \emph{totally isotropic} \cite{babai2022} if $V\subset V^\perp$, i.e., every pair of vectors  in $V$ are orthogonal.

Let $A$ be an $n\times n$ matrix over $\mathbb{F}_p$. We regard $A$ as a linear transformation  that maps $x$ to $Ax$ for each vector $x\in \mathbb{F}_p^n$.
A subspace $V$ of $\mathbb{F}_p^n$ is $A$-\emph{invariant} if $Ax\in V$ for any $x\in V$. For an $A$-invariant subspace $V$, we use $A|_V$ to denote the restriction of the linear transformation $A$ on $V$. We need the following basic result of Linear Algebra, which is usually referred to as Primary Decomposition Theorem; see e.g. \cite{hoffman}. 
\begin{theorem}[Primary Decomposition Theorem]\label{pft}
Let $\chi(A;x)\in \mathbb{F}_p[x]$ be the characteristic polynomial of $A$ and
$$\chi(A;x)=\phi_1^{r_1}(x)\phi_2^{r_2}(x)\cdots\phi_k^{r_k}(x),$$
be the standard factorization of $\chi(A;x)$. Let $V_i=\ker \phi^{r_i}_i(A), i = 1,\ldots, k$. Then\\	
(\rmnum{1}) $\mathbb{F}_p^n = V_1\oplus V_2\oplus\cdots\oplus V_k$; \\
(\rmnum{2}) each $V_i$ is $A$-invariant;  \\
(\rmnum{3}) the characteristic polynomial of $A|_{V_i}$ is $\phi_i^{r_i}(x)$;\\
(\rmnum{4}) there are polynomials $h_1(x),\ldots,h_k(x)$ such that each $h_i(A)$ is identity on $V_i$ and is zero on all the other $V_i$'s;\\
(\rmnum{5}) for any $A$-invariant subspace $U$, we have $U=(U\cap V_1)\oplus (U\cap V_2)\oplus\cdots\oplus(U\cap V_k)$;\\
\end{theorem}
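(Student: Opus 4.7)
The plan is to follow the standard route via Bezout's identity and orthogonal idempotents. Since $\phi_1,\ldots,\phi_k$ are pairwise coprime irreducibles in $\mathbb{F}_p[x]$, setting $f_i(x):=\chi(A;x)/\phi_i^{r_i}(x)=\prod_{j\neq i}\phi_j^{r_j}(x)$ yields a family with $\gcd(f_1,\ldots,f_k)=1$. A polynomial Bezout identity supplies $g_i(x)\in\mathbb{F}_p[x]$ with $\sum_i g_i(x)f_i(x)=1$. I would then set $h_i(x):=g_i(x)f_i(x)$, which immediately gives $\sum_i h_i(A)=I$; this is the candidate for the polynomials in (iv).

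The key structural step is showing that $\{h_i(A)\}_{i=1}^k$ forms a complete family of orthogonal idempotents. For $i\neq j$, the polynomial $h_i(x)h_j(x)$ is divisible by $\chi(A;x)$ because it contains every factor $\phi_\ell^{r_\ell}$ at least once, so Cayley--Hamilton gives $h_i(A)h_j(A)=0$. Combined with $\sum_i h_i(A)=I$, this forces $h_i(A)^2=h_i(A)$. Letting $U_i:=h_i(A)\mathbb{F}_p^n$ immediately yields a direct sum decomposition $\mathbb{F}_p^n=U_1\oplus\cdots\oplus U_k$. To prove (i), I would identify $U_i=V_i$: the inclusion $U_i\subseteq V_i$ comes from $\phi_i^{r_i}(A)h_i(A)=g_i(A)\chi(A;A)=0$, and conversely if $v\in V_i$ then $h_j(A)v=0$ for all $j\neq i$ (since $\phi_i^{r_i}$ divides $h_j$), so $v=\sum_j h_j(A)v=h_i(A)v\in U_i$. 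At this point (iv) is also established from the definition and orthogonality.

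For (ii), $A$ commutes with $\phi_i^{r_i}(A)$, so $A$ preserves $\ker \phi_i^{r_i}(A)=V_i$. For (v), given an $A$-invariant subspace $U$ and $u\in U$, writing $u=\sum_j h_j(A)u$ expresses $u$ as a sum of vectors each lying in $U$ (because $h_j(A)$ is a polynomial in $A$, hence preserves $U$) and in $V_j$ (by the identification above); together with the ambient direct sum this shows $U=\bigoplus_j (U\cap V_j)$.

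The subtlest step is (iii). Let $\chi_i(x)$ be the characteristic polynomial of $A|_{V_i}$. Using (i) and the block-diagonal form of $A$ in a basis adapted to the decomposition, one has $\chi(A;x)=\prod_i\chi_i(x)$. Since $\phi_i^{r_i}$ annihilates $A|_{V_i}$, the minimal polynomial of $A|_{V_i}$ is a power of the irreducible $\phi_i$; because the characteristic and minimal polynomials share the same irreducible factors over $\mathbb{F}_p[x]$, one has $\chi_i(x)=\phi_i^{t_i}(x)$ for some $t_i\ge 1$. Comparing $\prod_i\phi_i^{t_i}=\prod_i\phi_i^{r_i}$ and invoking unique factorization in $\mathbb{F}_p[x]$ forces $t_i=r_i$. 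The main obstacle is precisely justifying the claim that $\chi_i$ and the minimal polynomial of $A|_{V_i}$ share irreducible factors without tacitly passing to an algebraic closure; I would handle this by working directly over $\mathbb{F}_p[x]$ using rational canonical form on $V_i$, or equivalently by noting that any irreducible factor of $\chi_i$ must divide the minimal polynomial (since the minimal polynomial kills $A|_{V_i}$ on a generalized eigenspace for that factor).
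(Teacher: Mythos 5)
The paper states this theorem without proof, citing it as a standard result from Hoffman--Kunze, so there is no in-paper argument to compare against; your proof via the Bezout identity $\sum_i g_i f_i=1$ and the resulting orthogonal idempotents $h_i(A)$ is precisely the classical textbook argument and is correct, including the careful handling of (iii) through the fact that the characteristic and minimal polynomials of $A|_{V_i}$ share the same irreducible factors over $\mathbb{F}_p$.
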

\begin{lemma}\label{dim}
	Using the notations of Theorem \ref{pft}, we have\\
	(\rmnum{1}) $\phi_i(A) $ is $A$-invariant and $\{0\}\neq\ker \phi_i(A)\subset V_i$;\\
	(\rmnum{2}) every nonzero $A$-invariant subspace $U$ containing in some $V_i$ has dimension $r\cdot \deg(\phi_i)$ for some $r\in\{1,2,\ldots,r_i\}$ and hence has dimension at least $\deg(\phi_i)$.
\end{lemma}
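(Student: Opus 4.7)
For part (i), I would start from the observation that $A$ commutes with $\phi_i(A)$, so $\phi_i(A)(Av)=A\phi_i(A)v=0$ whenever $\phi_i(A)v=0$; this gives that $\ker\phi_i(A)$ is $A$-invariant (I read the statement ``$\phi_i(A)$ is $A$-invariant'' as referring to its kernel). The inclusion $\ker\phi_i(A)\subset V_i$ is immediate because $V_i=\ker\phi_i^{r_i}(A)$, so any vector annihilated by $\phi_i(A)$ is also annihilated by $\phi_i^{r_i}(A)$. For nontriviality, I would fix any $0\ne v\in V_i$ (existence follows from $r_i\ge 1$ and Theorem \ref{pft}(iii)) and let $k$ be the least positive integer with $\phi_i^{k}(A)v=0$; then $\phi_i^{k-1}(A)v$ is a nonzero element of $\ker\phi_i(A)$.

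For part (ii), let $U\subset V_i$ be a nonzero $A$-invariant subspace and write $T:=A|_U$. Since $V_i=\ker\phi_i^{r_i}(A)$, we have $\phi_i^{r_i}(T)=0$, so the minimal polynomial of $T$ divides $\phi_i^{r_i}$. Irreducibility of $\phi_i$ then forces this minimal polynomial to be $\phi_i^{s}$ for some $1\le s\le r_i$. Invoking the classical fact that the characteristic and minimal polynomials of a linear operator share the same set of irreducible factors over $\mathbb{F}_p$, I conclude that the characteristic polynomial of $T$ must be of the form $\phi_i^{r}$ for some $r\ge s\ge 1$ (divisibility by the minimal polynomial gives $r\ge s$). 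Taking degrees yields $\dim U=r\deg\phi_i$. The upper bound $r\le r_i$ comes from extending a basis of $U$ to a basis of $V_i$: the matrix of $A|_{V_i}$ in the extended basis is block upper triangular with $T$ as its leading block, so $\chi(T;x)$ divides $\chi(A|_{V_i};x)=\phi_i^{r_i}$, forcing $r\le r_i$. The ``hence'' statement then follows from $r\ge 1$.

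The content is fairly routine once the Primary Decomposition Theorem is in hand; the only subtle point is the invocation of the linear-algebra fact that the characteristic and minimal polynomials share the same irreducible factors, which is precisely what pins down $\chi(T;x)$ to a pure power of $\phi_i$ and thereby forces $\dim U$ to be a multiple of $\deg\phi_i$.
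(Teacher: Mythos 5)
Your proposal is correct and follows essentially the same route as the paper: the heart of part (ii) in both arguments is that $\chi(A|_U;x)$ divides $\chi(A|_{V_i};x)=\phi_i^{r_i}(x)$, which together with the irreducibility of $\phi_i$ forces $\chi(A|_U;x)=\phi_i^r(x)$ with $1\le r\le r_i$; your detour through the minimal polynomial of $A|_U$ is harmless but redundant, since the divisibility argument alone already pins down the characteristic polynomial. For part (i) the paper gets $\ker\phi_i(A)\neq\{0\}$ from the singularity of $\phi_i^{r_i}(A)$ (hence of $\phi_i(A)$) rather than your minimal-exponent chain, but both are routine one-liners, and your reading of ``$\phi_i(A)$ is $A$-invariant'' as referring to the kernel matches the paper's intent.
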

\begin{proof}
	By Theorem \ref{pft}, we know that $V_i$ is nonzero. Thus, $\phi_i^{r_i}(A)$ and hence $\phi_i(A)$ is singular. This means that $\ker \phi_i(A)$ is nonzero. Clearly, $\ker \phi_i(A)\subset \ker\phi_i^{r_i}(A)=V_i$ and $\ker \phi_i(A)$ is $A$-invariant. Thus (\rmnum{1}) holds.
	
	Since $U$ is  $A$-invariant and  $U\subset V_i$, we see that  $\chi(A|_U;x)$ divides $\chi(A|_{V_i};x)$. Noting that  $\chi(A|_{V_i};x)=\phi_i^{r_i}(x)$ and  $\phi_i(x)$ is irreducible, the only factors of $\chi(A|_{V_i};x)$ are  $\phi_i^r(x)$ for $r\in \{0,1,\ldots,r_i\}$. As $U$ is a nonzero,  $\chi(A|_U;x)$  has positive degree and hence  $\chi(A|_U;x)=\phi_i^r(x)$ for positive $r\le r_i$. Thus, $\dim U=\deg \chi(A|_U;x)=r\cdot \deg \phi_i(x)$, as desired.
\end{proof}
\begin{lemma}\label{pfts}Let $A$ be a symmetric matrix over $\mathbb{F}_p$ and $U$ be a totally isotropic $A$-invariant subspace. Then, using the notations of Theorem \ref{pft},\\
(\rmnum{1}) $V_i\perp V_j$ for all distinct $i$ and $j$;\\
(\rmnum{2}) $V_i^\perp=\oplus_{j\neq i}V_j=V_1\oplus\cdots\oplus V_{i-1}\oplus V_{i+1}\cdots\oplus V_k$;\\
(\rmnum{3}) if $\phi_i(x)$ is a simple factor (i.e., $r_i=1$), then $U\cap V_i=\{0\}$;\\
(\rmnum{4}) $U=\oplus (U\cap V_i)$, where the summation is taken over all subsripts $i$ satisfying $r_i\ge 2$.
\end{lemma}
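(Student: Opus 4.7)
The plan is to prove (i)--(iv) in order, leaning on the symmetry of $A$, the non-degeneracy of the standard bilinear form on $\mathbb{F}_p^n$, and Lemma \ref{dim}; the argument is essentially structural. For part (i), I would exploit coprimality of $\phi_i^{r_i}$ and $\phi_j^{r_j}$: a B\'ezout identity yields $a(x)\phi_i^{r_i}(x)+b(x)\phi_j^{r_j}(x)=1$ in $\mathbb{F}_p[x]$, and since $A$ is symmetric so is $a(A)\phi_i^{r_i}(A)$, which commutes with $\phi_i^{r_i}(A)$. Applied to $u\in V_i$ and $v\in V_j$, this makes $u^\T v$ vanish because the symmetric operator $\phi_i^{r_i}(A)$ kills $u$. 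Part (ii) then follows from (i) by dimension counting: $\bigoplus_{j\neq i}V_j$ is contained in $V_i^\perp$, and both have dimension $n-\dim V_i$ (the left side by Theorem \ref{pft}(i), the right side because the standard form is non-degenerate, so $\dim V+\dim V^\perp=n$).

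The substantive part is (iii). I would argue by contradiction: assuming $U\cap V_i$ is nonzero when $r_i=1$, this intersection is a nonzero $A$-invariant subspace of $V_i$, so Lemma \ref{dim}(ii) forces its dimension to equal $r\deg\phi_i$ for some $r\in\{1,\ldots,r_i\}=\{1\}$. Since $\dim V_i=\deg\phi_i$ as well, I obtain $U\cap V_i=V_i$, i.e., $V_i\subseteq U$. Total isotropy of $U$ now gives $V_i\perp V_i$; combined with (i), $V_i$ is orthogonal to every $V_j$ and hence to $\mathbb{F}_p^n=\bigoplus_j V_j$, so non-degeneracy forces $V_i=\{0\}$, contradicting Lemma \ref{dim}(i). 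Finally, part (iv) is immediate: Theorem \ref{pft}(v) decomposes $U$ as $\bigoplus_i(U\cap V_i)$, and (iii) wipes out the $r_i=1$ summands.

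The only real subtlety is the bootstrap in (iii). The point is to rule out any proper nonzero $A$-invariant subspace of $V_i$, which is exactly what Lemma \ref{dim}(ii) provides when $r_i=1$: irreducibility of $\phi_i$ together with $r_i=1$ leaves no intermediate dimension between $0$ and $\dim V_i$, so the isotropic vector inside $U\cap V_i$ is forced to drag along the whole of $V_i$ and thereby trigger the contradiction with (i). Apart from this bootstrap, every step is routine linear algebra over $\mathbb{F}_p$, and I do not anticipate further obstacles.
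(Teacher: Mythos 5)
Your proof is correct and follows essentially the same route as the paper: orthogonality of the $V_i$ via a polynomial identity in $A$ plus symmetry, dimension counting for (ii), forcing $U\cap V_i=V_i$ when $r_i=1$ and deriving a contradiction from total isotropy, and then invoking Theorem \ref{pft}(v) for (iv). The only cosmetic differences are that you build the annihilating identity from B\'ezout coprimality rather than citing the projections $h_i(A)$ of Theorem \ref{pft}(iv), and you close part (iii) via non-degeneracy of the form instead of contradicting the directness of $\oplus_j V_j$; both variants are equivalent.
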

\begin{proof}
	Let $\xi$ and $\eta$ be any vectors in $V_i$ and $V_j$ respectively. By Theorem \ref{pft} (\rmnum{4}) and noting that $A^\T=A$, we have
	$$\xi^\T\eta=(h_i(A)\xi)^\T \eta=\xi^\T(h_i(A) \eta)=0.$$
	This proves (\rmnum{1}). 
	
	By (\rmnum{1}), we see that $V_i^\perp\supset\oplus_{j\neq i}V_j$. On the other hand, from the definition of orthogonal space $V_i^\perp$, we easily see that   $\dim V_i^\perp =n-\dim V_i$. Noting that $\dim(\oplus_{j\neq i}V_j)=n-\dim V_i$, the two spaces $V_i^\perp$ and $\oplus_{j\neq i}V_j$ must be equal. This proves (\rmnum{2}).
	
	Let $S=U\cap V_i$. Clearly, $S$ is $A$-invariant as both $U$ and $V_i$ are $A$-invariant. Suppose to the contrary that $S\neq \{0\}$. We first claim that $S=V_i$. Indeed, since $S$ is a subspace of $V_i$, we see that  $\chi(A|_{S},x)$ divides  $\chi(A|_{V_i},x)$.
	But  $\chi(A|_{V_i},x)$ is $\phi_i(x)$ which is irreducible, we must have  $\chi(A|_{S},x)=\chi(A|_{V_i},x)$. Thus $S=V_i$, as claimed. 	Since $V_i=S\subset U$ and $U$ is totally isotropic, we have $V_i^\perp\supset U^\perp\supset U\supset V_i$.   This together with (\rmnum{2}) implies 
	$\oplus_{j\neq i}V_j\supset V_i$, which contradicts Theorem \ref{pft} (\rmnum{1}) and hence completes the proof of (\rmnum{3}). 
	
	The last assertion clearly follows from (\rmnum{3}) and Theorem \ref{pft} (\rmnum{5}). 
\end{proof}
\subsection{Smith normal form}
An $n\times n$ matrix $U$ with integral entries is called \emph{unimodular} if $\det(U) =\pm 1$.  The following result is well known.  We restricted ourselves to square matrices for simplicity.
\begin{proposition} For every $n\times n$ integral matrix $M$ with rank $r$ over $\mathbb{Q}$, there exist unimodular matrices $U$ and $V$ such that $M = USV$,
	where $S = \diag[d_1, d_2,\ldots, d_r,0,\ldots,0]$ is a diagonal matrix with  $d_i \mid d_{i+1}$ for $i = 1, 2, \ldots, r- 1$.
\end{proposition}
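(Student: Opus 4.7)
The plan is a constructive proof by induction on the matrix size $n$, built on the correspondence between integer elementary row/column operations and left/right multiplication by unimodular matrices. The admissible operations are: swapping two rows (columns), adding an integer multiple of one row (column) to another, and negating a row (column). Each is realized by a unimodular elementary matrix, so any composition of them yields the desired unimodular $U$ and $V$ with $M=USV$. Since unimodular transformations preserve the rational rank, the number of nonzero diagonal entries of $S$ must equal $r$; it therefore remains only to produce the diagonal form together with the divisibility chain.

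For the inductive step the workhorse is the \emph{minimum nonzero absolute value} argument. First I would pick a nonzero entry of $M$ of smallest absolute value and move it to position $(1,1)$ via swaps; call it $a$. If some entry $a_{i1}$ (or $a_{1j}$) is not a multiple of $a$, write $a_{i1}=qa+r$ with $0<|r|<|a|$, subtract $q$ times row $1$ from row $i$, and swap to bring the remainder to position $(1,1)$. Because $|a|$ is a positive integer that strictly decreases, this inner loop terminates with an $a$ that divides every entry in its row and column; type-(ii) operations then clear those entries, producing a block form $\diag[a]\oplus M'$. If $a$ still fails to divide some entry of $M'$, add the offending row of $M'$ to row $1$ and restart the shrinking procedure; termination again follows from strict descent of $|a|$. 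At the end of this outer loop one obtains $d_1:=a$ dividing every entry of $M'$.

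Applying the inductive hypothesis to the smaller matrix $M'$ yields unimodular $U',V'$ with $U'M'V'=\diag[d_2,\ldots,d_r,0,\ldots,0]$ and $d_2\mid d_3\mid\cdots\mid d_r$. Assembling with $\diag[d_1]$ and composing with the first-stage unimodular transformations produces the Smith form of $M$; the extra relation $d_1\mid d_2$ is automatic since $d_1$ divides every entry of $M'$, a property preserved under the unimodular transformations used on $M'$. The main obstacle is purely bookkeeping: one has to verify that the nested pivot-shrinking loops terminate and that the final pivot divides every entry of $M'$, not only those of its first row and column. Both points rest on the same descent argument on $|a|$, and once they are settled the remaining verifications are routine.
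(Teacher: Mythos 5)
The paper does not prove this proposition; it is stated as a well-known fact (existence of the Smith normal form) with no argument supplied. Your proposal is the standard constructive proof --- induction on size, elementary unimodular row/column operations, and strict descent on the minimal nonzero absolute value to force the pivot to divide its row, its column, and eventually the whole complementary block --- and it is correct, including the two points that usually get glossed over: termination of the nested loops and the preservation of ``$d_1$ divides every entry'' under the subsequent operations on $M'$, which yields $d_1\mid d_2$. Nothing is missing beyond the trivial base case $M=0$.
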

The above diagonal matrix $S$ is called the \emph{Smith normal form} (SNF) of $M$, and $d_i$ with $1\le i\le r$ is called the $i$-th \emph{invariant factor} of the matrix $M$. 

For a prime $p$,  and use $\rank_p M$ and $\nullity_p M$ to denote the rank and the nullity of $M$ over $\mathbb{F}_p$, respectively. 
\begin{lemma}\label{snfbasic}
	Let $M$ be an  $n\times n$ integral matrix of rank $r$ and $S=\diag[d_1,\ldots,d_n]$ be its SNF, where $d_{r+1}=\cdots=d_n=0$  when $r<n$. Then, for any prime $p$,\\
	(\rmnum{1}) $\det M=\pm d_1d_2\cdots d_n$;\\
	(\rmnum{2}) $\rank_p M=\max\left(\{0\}\cup\{i\colon\,p\nmid d_i\}\right)$ and $\nullity_p M=\max\left(\{0\}\cup\{i\colon\,p\mid d_{n+1-i}\}\right)$;\\
	(\rmnum{3}) $Mx\equiv 0\pmod{p^2}$ has an integral solution $x\not\equiv 0\pmod{p}$ if and only if $p^2\mid d_n$.
\end{lemma}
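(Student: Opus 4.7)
The overall plan is to invoke the SNF decomposition $M=USV$ with $U,V$ unimodular and then prove each assertion by pushing the statement over to the diagonal matrix $S$, where everything becomes a componentwise calculation.

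For (i), since $U$ and $V$ are unimodular we have $\det U,\det V\in\{\pm 1\}$, hence $\det M=\pm\det S=\pm d_1d_2\cdots d_n$. For (ii), the key observation is that $U$ and $V$ remain invertible modulo $p$ (their determinants are units in $\mathbb{F}_p$), so $\rank_p M=\rank_p S$ and $\nullity_p M=\nullity_p S$. The matrix $S\bmod p$ is diagonal with entries $d_i\bmod p$, so its rank is the number of indices $i$ with $p\nmid d_i$. The divisibility chain $d_i\mid d_{i+1}$ then forces the set of such indices to be a prefix $\{1,2,\ldots,k\}$: indeed, if $p\mid d_i$ for some $i$ then $p\mid d_j$ for all $j\ge i$. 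Consequently $\rank_p M$ equals the largest index $i$ with $p\nmid d_i$ (or $0$ if no such index exists), and $\nullity_p M=n-\rank_p M$, which after relabeling $j=n+1-i$ gives exactly the second formula.

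For (iii), I would change variables by setting $y=Vx$; since $V$ is unimodular, the map $x\leftrightarrow y$ is a bijection on $\mathbb{Z}^n$ that also induces a bijection on $(\mathbb{Z}/p\mathbb{Z})^n$, so the condition $x\not\equiv 0\pmod p$ is equivalent to $y\not\equiv 0\pmod p$. Similarly, $U$ is invertible modulo $p^2$, so $Mx\equiv 0\pmod{p^2}$ is equivalent to $Sy\equiv 0\pmod{p^2}$, i.e., $d_iy_i\equiv 0\pmod{p^2}$ for all $i$. If $p^2\mid d_n$, choose $y=(0,\ldots,0,1)^{\T}$, which satisfies all congruences and is nonzero mod $p$. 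Conversely, if $y\not\equiv 0\pmod p$ solves $Sy\equiv 0\pmod{p^2}$, pick any index $i$ with $p\nmid y_i$; then $d_iy_i\equiv 0\pmod{p^2}$ forces $p^2\mid d_i$, and since $d_i\mid d_n$ we conclude $p^2\mid d_n$.

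No step here is a genuine obstacle; the only place requiring care is using the divisibility chain $d_i\mid d_{i+1}$ in (ii) to show that the set $\{i:p\nmid d_i\}$ is an initial segment, and, in (iii), being careful about whether $U$ and $V$ stay invertible modulo $p$ versus modulo $p^2$ (they do in both cases because their determinants are $\pm 1$, which are units in every $\mathbb{Z}/p^k\mathbb{Z}$).
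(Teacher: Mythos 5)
Your proof is correct. The paper itself gives no argument here --- it declares (i) and (ii) ``clear'' and cites an external reference for (iii) --- and what you have written is exactly the standard argument those sources use: reduce to the diagonal matrix $S$ via the unimodular factors $U,V$ (which stay invertible modulo any $p^k$ because their determinants are $\pm 1$), use the divisibility chain to see that $\{i : p\nmid d_i\}$ is an initial segment for (ii), and for (iii) observe that $d_iy_i\equiv 0\pmod{p^2}$ with $y_i$ a unit forces $p^2\mid d_i\mid d_n$. All three parts check out, including the edge cases where some $d_i=0$ (every integer divides $0$, so the final divisibility step in (iii) still goes through).
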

The first two assertions of the above  lemma should be clear; for the last assertion, we refer to \cite{wang2013EJC} for a proof.
\begin{lemma}\label{Basicine}
	Let $A$ be any integral matrix and $\phi(x)$ be an irreducible factor of $\chi(A;x)$ over $\mathbb{F}_p$. Then
	\begin{equation}\label{basicine}
	\ord_p(\det \phi(A))\ge\deg \phi(x).
	\end{equation}	
\end{lemma}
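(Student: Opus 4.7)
The plan is to reduce the problem modulo $p$, apply Lemma \ref{dim} to bound the $\mathbb{F}_p$-nullity of $\phi(A)$ from below by $\deg\phi(x)$, and then translate that nullity bound into a divisibility statement about $\det\phi(A)$ via the Smith normal form.

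First I would let $\bar{A}$ denote the reduction of $A$ modulo $p$, so that $\chi(\bar{A};x)=\chi(A;x)\bmod p\in\mathbb{F}_p[x]$ has $\phi(x)$ as one of its irreducible factors. Write the standard factorization $\chi(\bar{A};x)=\phi_1^{r_1}(x)\cdots\phi_k^{r_k}(x)$ with $\phi=\phi_i$ for some $i$ having multiplicity $r_i\ge 1$, and apply the Primary Decomposition Theorem to obtain the $\bar{A}$-invariant subspace $V_i=\ker\phi^{r_i}(\bar{A})$.

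Next, by Lemma \ref{dim}(\rmnum{1}), the subspace $\ker\phi(\bar{A})$ is a nonzero $\bar{A}$-invariant subspace contained in $V_i$. Lemma \ref{dim}(\rmnum{2}) then forces
\[
\dim_{\mathbb{F}_p}\ker\phi(\bar{A})\;\ge\;\deg\phi(x),
\]
i.e., $\nullity_p\phi(A)\ge\deg\phi(x)$. Now let $S=\diag[d_1,\ldots,d_n]$ be the Smith normal form of the integral matrix $\phi(A)$ (with the convention in the remark following Theorem \ref{main} that $\phi(x)\in\mathbb{Z}[x]$ with coefficients in $\{0,1,\ldots,p-1\}$). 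If $\det\phi(A)=0$, then $\ord_p(\det\phi(A))=\infty$ and \eqref{basicine} holds trivially, so I may assume $\det\phi(A)\ne 0$, in which case every $d_j$ is nonzero.

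Finally, setting $k=\deg\phi(x)$, Lemma \ref{snfbasic}(\rmnum{2}) combined with $\nullity_p\phi(A)\ge k$ gives $p\mid d_{n},d_{n-1},\ldots,d_{n-k+1}$. By Lemma \ref{snfbasic}(\rmnum{1}), $\det\phi(A)=\pm d_1d_2\cdots d_n$, so $p^k$ divides $\det\phi(A)$, establishing \eqref{basicine}. The only mildly delicate point is handling the possibility that $\det\phi(A)$ vanishes, which is dispatched by the convention $\ord_p(0)=\infty$; every other step is a direct invocation of the preceding lemmas.
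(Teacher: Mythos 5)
Your proposal is correct and follows essentially the same route as the paper: reduce mod $p$, use Lemma \ref{dim} to get $\nullity_p\phi(A)\ge\deg\phi(x)$, and convert this into $p^{\deg\phi}\mid\det\phi(A)$ via the Smith normal form and Lemma \ref{snfbasic}. Your explicit handling of the degenerate case $\det\phi(A)=0$ via the convention $\ord_p(0)=\infty$ is a small tidiness improvement over the paper's write-up, but the argument is otherwise identical.
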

\begin{proof}
Let $S=\diag[d_1,d_2,\ldots,d_n]$ be the SNF of  the integral matrix $\phi(A)$. Noting that, over $\mathbb{F}_p$, the nullity of $\phi(A)$ equals $\dim \ker\phi(A)$, Lemma \ref{dim} impliles that $\nullity_p\phi(A)\ge \deg\phi(x)$.  Let  $t= \deg\phi(x)$. Then by Lemma \ref{snfbasic} (\rmnum{1}) and (\rmnum{2}), the last $t$ diagonal entries $d_{n-t+1},\ldots,d_n$ are multiples of $p$ and hence  $\det \phi(A)$ is a multiple of $p^t$. This means $\ord_p(\det \phi(A))\ge t$, as desired.
\end{proof}
Using Lemma \ref{Basicine} for $A$ and $A+J$ simultaneously, it is not difficult to see that  if $\phi(x)$ is a common irreducible factor of $\chi(A;x)$ and $\chi(A+J;x)$ over some field $\mathbb{F}_p$,
then \begin{equation}\label{relex}
\min\{\ord_p(\det \phi(A)),\ord_p(\det \phi(A+J))\}\ge\deg \phi(x).
\end{equation}
Comparing Inequality (\ref{relex}) and Equality (\ref{keyequ}), Theorem \ref{main} states that, roughly speaking, if $G$ is not DGS, then at least one strict inequality must be witnessed in some instance of Inequality (\ref{relex}). 
\subsection{Rational regular orthogonal matrix}
An orthogonal matrix $Q$ is called \emph{regular} if $Qe=e$. We use $\rrg$ to denote the set of all rational regular orthogonal matrices of order $n$. We note that  $\rrg$ is a group under the usual matrix multiplication.  The following result is fundamental in the study of generalized spectral characterization of graphs.
\begin{lemma}[\cite{johnson1980JCTB,wang2006EUJC}]
	Let $G\in\mathcal{G}_n^c$. An $n$-vertex graph $H$ is generalized cospectral with $G$ if and only if there exists a  $Q\in \rrg$ such that 
	$Q^\T A(G) Q=A(H)$. Moreover, $Q$ is unique and satisfies $Q^\T =W(H)W^{-1} (G)$.
\end{lemma}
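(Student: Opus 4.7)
I would split the statement into three parts: the easy ``if'' direction, the existence of $Q$ in the ``only if'' direction together with the stated formula, and uniqueness.

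For the ``if'' direction, assume $Q\in\rrg$ satisfies $Q^\T A(G) Q = A(H)$. Orthogonality of $Q$ immediately gives $\chi(A(G);x)=\chi(A(H);x)$. Regularity gives $Q^\T J Q = Q^\T ee^\T Q = ee^\T = J$, so conjugating $A(\bar G)=J-I-A(G)$ by $Q$ produces $A(\bar H)$, whence $\chi(A(\bar G);x)=\chi(A(\bar H);x)$. Thus $G$ and $H$ are generalized cospectral.

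For the ``only if'' direction, the crucial observation is that both the Gram matrix $W(G)^\T W(G)$ and the vector $W(G)^\T e$ are determined by the generalized spectrum: their entries reduce to walk numbers $e^\T A(G)^k e$, which are the Taylor coefficients (at infinity) of the scalar resolvent $e^\T(xI-A(G))^{-1} e$. By the matrix-determinant lemma, this resolvent is a rational function in $\chi(A(G);x)$ and $\chi(A(G)+tJ;x)$ for any nonzero $t$, and the latter is equivalent to $\chi(A(\bar G);x)$ via an affine change of variable. Hence $W(G)^\T W(G)=W(H)^\T W(H)$ and $W(G)^\T e=W(H)^\T e$; in particular $\det W(H)\neq 0$, so $H$ is controllable. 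Define $Q^\T := W(H)W(G)^{-1}$, a rational matrix. Then
\[
Q^\T Q = W(H)\bigl(W(G)^\T W(G)\bigr)^{-1} W(H)^\T = W(H)\bigl(W(H)^\T W(H)\bigr)^{-1} W(H)^\T = I,
\]
and $Q^\T e = W(H) W(G)^{-1} e = W(H) e_1 = e$ (the first column of $W(G)$ being $e$), so $Qe=e$ follows by orthogonality. To verify $Q^\T A(G)Q=A(H)$, I would use the companion-matrix identity $W(G)^{-1} A(G) W(G)=C$, where $C$ is the companion matrix of the shared characteristic polynomial (valid because controllability makes $e$ a cyclic vector for $A(G)$); the same identity holds with $H$. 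Then
\[
Q^\T A(G) = W(H) W(G)^{-1} A(G) = W(H) C W(G)^{-1} = A(H) W(H) W(G)^{-1} = A(H) Q^\T,
\]
and right-multiplying by $Q$ finishes this part.

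For uniqueness, suppose $Q_1,Q_2\in\rrg$ both conjugate $A(G)$ to $A(H)$. Then $M:=Q_1 Q_2^\T\in\rrg$ commutes with $A(G)$. Controllability means $e$ is a cyclic vector for $A(G)$, so the centralizer of $A(G)$ is the polynomial algebra in $A(G)$, and $M=p(A(G))$ for some polynomial $p$ of degree less than $n$. The regularity condition $Me=e$ gives $p_0 e+p_1 A(G)e+\cdots+p_{n-1}A(G)^{n-1}e=e$, and the columns of $W(G)$ being linearly independent forces $p_0=1$ and $p_1=\cdots=p_{n-1}=0$. Hence $M=I$ and $Q_1=Q_2$, establishing both uniqueness and the stated formula. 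The main obstacle is verifying that generalized cospectrality forces $W(G)^\T W(G)=W(H)^\T W(H)$; once this is in hand, everything else reduces to short algebraic manipulations using controllability.
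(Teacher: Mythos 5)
Your proposal is correct, and it is essentially the standard argument from the cited sources (Johnson--Newman and Wang--Xu): the paper itself states this lemma without proof, and the classical proof proceeds exactly as you do, by showing that generalized cospectrality determines the walk numbers $e^\T A^k e$ via the identity $\det(xI-A-tJ)=\det(xI-A)\bigl(1-te^\T(xI-A)^{-1}e\bigr)$, hence $W(G)^\T W(G)=W(H)^\T W(H)$, and then defining $Q^\T=W(H)W(G)^{-1}$ and using the companion-matrix relation $AW=WC$. One small simplification for uniqueness: from $MA(G)=A(G)M$ and $Me=e$ you get $MW(G)=W(G)$ directly, so $M=I$ without invoking the centralizer structure, but your version is also valid.
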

Define 
$$\mathcal{Q}(G)=\{Q\in \rrg\colon\, Q^\T A(G)Q \text{~is a (0,1)-matrix}\}.$$
\begin{lemma}[\cite{wang2006EUJC}] \label{dgsper}Let G be a controllable graph. Then G is DGS if and only if $\mathcal{Q}(G)$ contains only permutation matrices.
	\end{lemma}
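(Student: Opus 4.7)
The plan is to deduce the lemma directly from the preceding uniqueness result (that any $H$ generalized cospectral with $G$ corresponds to a unique $Q \in \rrg$ with $Q^\T A(G) Q = A(H)$). I would prove the two implications separately, and the whole argument is essentially bookkeeping on top of that correspondence.

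For the easier direction ($\Leftarrow$), suppose $\mathcal{Q}(G)$ contains only permutation matrices, and let $H$ be any graph generalized cospectral with $G$. The preceding lemma supplies a (unique) $Q\in\rrg$ with $Q^\T A(G) Q=A(H)$. Since $A(H)$ is a $(0,1)$-matrix by definition, $Q$ lies in $\mathcal{Q}(G)$, hence is a permutation matrix $P$. Then $A(H)=P^\T A(G) P$ is exactly the relabeling of $G$ by $P$, so $H\cong G$. Thus $G$ is DGS.

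For the other direction ($\Rightarrow$), suppose $G$ is DGS and pick any $Q\in\mathcal{Q}(G)$. Set $M=Q^\T A(G) Q$. I would first verify that $M$ is the adjacency matrix of a genuine simple graph $H$: it is symmetric because $A(G)$ is and $Q$ is orthogonal; its entries lie in $\{0,1\}$ by the definition of $\mathcal{Q}(G)$; and since orthogonal similarity preserves trace, $\mathrm{tr}(M)=\mathrm{tr}(A(G))=0$, which for a $(0,1)$-matrix forces the diagonal to vanish. Next, I need $H$ to be generalized cospectral with $G$. Because $Q$ is regular, $Qe=e$ and (since $Q^\T=Q^{-1}$) also $Q^\T e=e$, so $Q^\T J Q = (Q^\T e)(Q^\T e)^\T=ee^\T=J$. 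Therefore $Q^\T(A(G)+xJ)Q=A(H)+xJ$ for every scalar $x$, which gives $\sigma(G)=\sigma(H)$ (take $x=0$) and $\sigma(\bar G)=\sigma(\bar H)$ (take $x=-1$ and use $A(\bar G)=J-I-A(G)$). Hence $H$ is generalized cospectral with $G$.

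Now the DGS hypothesis gives an isomorphism $H\cong G$, realized by some permutation matrix $P$ with $P^\T A(G) P=A(H)$. Since $P\in\rrg$ as well, both $P$ and $Q$ are elements of $\rrg$ conjugating $A(G)$ to $A(H)$; the uniqueness clause of the preceding lemma then forces $Q=P$, so $Q$ is a permutation matrix. I expect no real obstacle here — the main thing to be careful about is the verification that $M$ is a bona fide adjacency matrix (symmetry, zero diagonal via the trace argument) and the appeal to $Q^\T J Q=J$ to promote the $(0,1)$-similarity to a genuine generalized cospectrality; once those are in place, uniqueness in the preceding lemma closes the argument.
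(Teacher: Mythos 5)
Your proof is correct and complete in both directions; the key steps (the trace argument to see that $Q^\T A(G)Q$ has zero diagonal, the identity $Q^\T JQ=J$ to upgrade cospectrality to generalized cospectrality, and the uniqueness clause of the preceding lemma to force $Q=P$) are exactly the right ones. The paper itself gives no proof of this lemma — it is quoted from \cite{wang2006EUJC} — and your argument is the standard derivation from the Johnson--Newman existence-and-uniqueness result, so there is nothing to correct.
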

\begin{definition}\normalfont{
	For a rational matrix $Q$, the \emph{level} of $Q$, denoted by $\ell(Q)$ or simply $\ell$, is the smallest positive integer $k$ such that $kQ$ is an integral matrix. In other words, $\ell(Q)$ is the least common multiple of denominators of all entries (as reduced fractions) in $Q$.}
\end{definition}
Clearly, a matrix $Q\in \rrg$ has level one if and only if $Q$ is a permutation matrix. Thus, we can restate Lemma  \ref{dgsper} as the following:
\begin{lemma} \label{dgslevel}Let $G\in\mathcal{G}_n^c$. Then G is DGS if and only if  $\ell(Q)=1$ for each $Q\in\mathcal{Q}(G)$.
\end{lemma}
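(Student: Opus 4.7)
The plan is to deduce Lemma \ref{dgslevel} directly from Lemma \ref{dgsper} by filling in the unproved remark immediately preceding it, namely that a matrix $Q \in \rrg$ has level one if and only if $Q$ is a permutation matrix. Once this equivalence is in hand, one simply substitutes ``$\ell(Q) = 1$'' for ``$Q$ is a permutation matrix'' inside the conclusion of Lemma \ref{dgsper}, which yields exactly the statement of Lemma \ref{dgslevel}. Thus the task reduces entirely to verifying this level-versus-permutation equivalence for an arbitrary $Q \in \rrg$, after which both directions of the ``if and only if'' in Lemma \ref{dgslevel} follow by pointwise translation through $\mathcal{Q}(G) \subseteq \rrg$.

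The forward direction of the equivalence is immediate: any permutation matrix has entries in $\{0,1\}$, hence is integral, and so has level $1$ by the definition of $\ell(\cdot)$. For the converse, I would argue as follows. Suppose $Q \in \rrg$ satisfies $\ell(Q) = 1$, so that $Q$ itself is an integer matrix. Orthogonality of $Q$ gives $Q^{\T} Q = I$, which means every column of $Q$ is an integer vector of Euclidean norm $1$; the only such vectors are $\pm e_j$ for $e_j$ a standard basis vector. Combined with the mutual orthogonality of the columns, each standard basis direction is hit by exactly one column, so $Q$ is a signed permutation matrix. Now apply the regularity condition $Qe = e$: every row sum of $Q$ equals $1$, and since each row of a signed permutation matrix contains a single nonzero entry equal to $\pm 1$, that entry must be $+1$. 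Hence $Q$ is a genuine permutation matrix.

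I do not foresee any obstacle in the argument, as the content beyond a bare reformulation of Lemma \ref{dgsper} is limited to the well-known elementary fact that integer orthogonal matrices are signed permutations, plus the one-line observation that regularity upgrades signed permutations to permutations. The value of packaging the equivalence in this way is purely expository: the downstream proof of Theorem \ref{main} is most naturally phrased as ruling out the possibility $\ell(Q) > 1$ for $Q \in \mathcal{Q}(G)$, and Lemma \ref{dgslevel} is precisely what licenses that reformulation of the DGS criterion.
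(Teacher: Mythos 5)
Your proposal is correct and follows essentially the same route as the paper: the paper simply asserts that a matrix in $\rrg$ has level one if and only if it is a permutation matrix ("Clearly...") and then restates Lemma \ref{dgsper}, whereas you supply the elementary verification of that equivalence (integral orthogonal matrices are signed permutations, and $Qe=e$ forces all signs to be $+1$). No further comment is needed.
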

Lemma \ref{dgslevel} explains the basic strategy to show a controllable graph $G$ to be DGS.  The key is to determine (or estimate) $\ell(Q)$ using only some properties of $G$ without  actually constructing $\mathcal{Q}(G)$.  The following three lemmas summarize some basic  results in this aspect.
\begin{lemma}[\cite{wang2006EUJC,wang2013EJC,wangyu2016,wang2017JCTB}]\label{best}
	Let $G\in\mathcal{G}_n^c$ and  $Q\in\mathcal{Q}(G)$. Letting $\ell=\ell(Q)$, then the followings hold:\\
	(\rmnum{1}) $\ell\mid d_n(W)$ and in particular  $\ell\mid \det W$, where $d_n(W)$ is the $n$-th invariant factor of $W$.\\
	(\rmnum{2}) If $p$ is a simple odd prime factor of $\det W$, then $p\nmid\ell$.\\ 
	(\rmnum{3}) Every prime factor of $\ell$ is a factor of $\Delta$.\\
	(\rmnum{4}) If  $p$ is a simple odd prime factor of $\Delta$, then $p\nmid\ell$.\\
	(\rmnum{5}) If $2^{-\lfloor\frac{n}{2}\rfloor}\det W$ is odd, then $\ell$ is odd.
	\end{lemma}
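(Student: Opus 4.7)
The plan is to apply Lemma~\ref{dgslevel}: it suffices to prove $\ell(Q)=1$ for every $Q\in\mathcal{Q}(G)$. Suppose for contradiction $\ell:=\ell(Q)>1$ and pick a prime $p\mid\ell$. Since $\theta(G)$ is odd, Lemma~\ref{best}(v) gives $\ell$ odd, hence $p$ is odd. Lemma~\ref{best}(ii)--(iv) then forces $p$ to be a multiple factor of both $\det W$ and $\Delta$; being odd, this gives $p^2\mid \theta(G)$, so $p$ is a multiple prime factor of $\theta(G)$ and hypothesis~\eqref{keyequ} applies for this $p$.

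Set $\tilde M=\ell Q\in\mathbb{Z}^{n\times n}$, $\bar M=\tilde M\bmod p$, and $V=\col(\bar M)\subseteq\mathbb{F}_p^n$. Since $\ell$ is the least common multiple of the denominators of $Q$, we have $\bar M\neq 0$, hence $V\neq\{0\}$. Reducing modulo $p$ the integer identities $\tilde M^\T\tilde M=\ell^2I$, $\tilde M e=\ell e$, $A\tilde M=\tilde M A(H)$, and $(A+J)\tilde M=\tilde M(A(H)+J)$ (which come from $Q^\T AQ=A(H)$, its analogue for $A+J$, and $Qe=e$), I obtain: (a) $V\subseteq V^\perp$, so $V$ is totally isotropic; (b) $e\in V^\perp$, so $Jv=(e^\T v)e=0$ for all $v\in V$; (c) $V$ is both $A$- and $(A+J)$-invariant. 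Item (b) gives $(A+J)|_V=A|_V$. I would then apply the primary decomposition of $A$ over $\mathbb{F}_p$, writing $\chi(A;x)=\phi_1^{r_1}\cdots\phi_k^{r_k}$ with eigenspaces $V_i$. By Lemma~\ref{pfts}(iv), $V=\bigoplus_{r_i\ge 2}(V\cap V_i)$. Because $A|_V=(A+J)|_V$, the corresponding decomposition of $V$ for $A+J$ coincides with this one, so each $\phi_i$ with $V\cap V_i\neq 0$ is also an irreducible factor of $\chi(A+J;x)$ of multiplicity $\ge 2$, i.e., a multiple irreducible factor of $\Phi_p(G;x)$; hypothesis~\eqref{keyequ} therefore applies to every such $\phi_i$.

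Since $V\neq\{0\}$, fix $i$ with $V\cap V_i\neq\{0\}$ and write $\phi=\phi_i$. By~\eqref{keyequ}, assume without loss of generality $\ord_p(\det\phi(A))=\deg\phi$. Combining $\nullity_p\phi(A)\ge\deg\phi$ (from the proof of Lemma~\ref{Basicine}) with $\sum_j\ord_p(d_j)=\deg\phi$ forces every $p$-divisible entry of the SNF of $\phi(A)$ to satisfy $\ord_p(d_j)=1$; in particular $p^2\nmid d_n(\phi(A))$, so by Lemma~\ref{snfbasic}(iii) no integer $x\not\equiv 0\pmod p$ satisfies $\phi(A)x\equiv 0\pmod{p^2}$. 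The contradiction will come from constructing such an $x$. Because $\phi(A)|_{V\cap V_i}$ is nilpotent on a nonzero subspace, there exists $v\in V\cap V_i$ with $v\neq 0$ and $\phi(\bar A)v=0$; writing $v=\bar M\bar y$, lifting $y$ to $\mathbb{Z}^n$, and setting $x=\tilde M y$ gives $\bar x=v\neq 0$, while the intertwining identity $\phi(A)\tilde M=\tilde M\phi(A(H))$ combined with $\phi(\bar A)v=0$ yields $\phi(A)x\equiv 0\pmod p$ automatically. The main obstacle is to upgrade this congruence to $\phi(A)x\equiv 0\pmod{p^2}$: writing $\phi(A)x=pw$, the identity $\tilde M^\T\tilde M=\ell^2I$ forces $\bar w\in V^\perp$, but one must further show that $\bar w=0$ (possibly after modifying $y$ by an integer shift, which changes $\bar w$ by an element of $\phi(\bar A)(V)$). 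The expected resolution is a careful mod-$p^2$ analysis combining $\tilde M^\T\tilde M=\ell^2I$ with the totally isotropic structure of $V$ and the compatibility of $\ker\phi(\bar A)$ with $V\cap V_i$. If the minimum in~\eqref{keyequ} is realized instead by $A+J$, the symmetric construction with $A+J$ in place of $A$ (using that $V\cap V_i$ is also the $\phi$-component of $V$ for the primary decomposition of $A+J$) produces the contradiction in exactly the same way.
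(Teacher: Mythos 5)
Your proposal does not address the statement it is meant to prove. The statement is Lemma~\ref{best} itself --- the five arithmetic properties of the level $\ell(Q)$ (divisibility of $d_n(W)$ and $\det W$, exclusion of simple odd prime factors of $\det W$ and of $\Delta$, and oddness of $\ell$ when $2^{-\lfloor n/2\rfloor}\det W$ is odd). What you have written is instead an outline of a proof of Theorem~\ref{main}, and in its very first paragraph it \emph{invokes} Lemma~\ref{best}(ii)--(v) as known facts to conclude that $p$ is odd and that $p^2$ divides both $\det W$ and $\Delta$. As an argument for Lemma~\ref{best} this is circular; as written it simply proves a different result. (In the paper, Lemma~\ref{best} is quoted from \cite{wang2006EUJC,wang2013EJC,wangyu2016,wang2017JCTB} without proof, so there is no internal argument to compare against; but that does not make your text a proof of it.)

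A genuine proof of Lemma~\ref{best} would have to start from the identity $Q^\T=W(H)W^{-1}(G)$ and the SNF of $W$ to get (\rmnum{1}); use the existence of a nontrivial isotropic vector in $\ker_p W^\T$ arising from a column of $\ell Q$ to rule out simple odd prime factors of $\det W$ in (\rmnum{2}); relate prime factors of $\ell$ to repeated eigenvalues of $A$ modulo $p$ for (\rmnum{3}) and (\rmnum{4}); and carry out the $2$-adic analysis of $W$ for (\rmnum{5}). None of these ingredients appears in your proposal. Separately, even judged as a proof of Theorem~\ref{main}, your text leaves its central step open: you explicitly defer the upgrade of $\phi(A)x\equiv 0\pmod p$ to $\phi(A)x\equiv 0\pmod{p^2}$ to ``a careful mod-$p^2$ analysis,'' which is precisely the content of Proposition~\ref{strict} in the paper and cannot be waved through.
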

\begin{lemma}[Exclusion Condition \cite{wang2006EUJC}] 
	Let $G\in \mathcal{G}_n^c$ and $p$ be an odd prime such that $\rank_p W=n-1$. Suppose that $z_0\in \mathbb{Z}^n$ is a nontrivial (i.e., nonzero mod $p$) solution of $W^\T z\equiv 0\pmod{p}$. If $z_0^\T z_0\not\equiv 0\pmod{p}$ then $p\nmid\ell(Q)$ for any $Q\in \mathcal{Q}(G)$.
\end{lemma}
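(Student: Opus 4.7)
The plan is to argue by contradiction: assume some $Q\in\mathcal{Q}(G)$ satisfies $p\mid \ell(Q)$ and combine the identity $Q^\T W(G)=W(H)$ (from the lemma characterizing $Q\in\mathcal{Q}(G)$ via the walk matrices) with the orthogonality relation $Q^\T Q=I$, both reduced modulo $p$, to force $\ell(Q)/p$ to already clear the denominators of $Q$, contradicting minimality of $\ell$.

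Set $\tilde{Q}:=\ell(Q)\,Q\in\mathbb{Z}^{n\times n}$; by the minimality of $\ell$ the matrix $\tilde{Q}$ is nonzero modulo $p$. Multiplying the identity $Q^\T W(G)=W(H)$ by $\ell$ gives $\tilde{Q}^\T W(G)=\ell\,W(H)$, and since $p\mid\ell$ this yields
\[
W(G)^\T \tilde{Q}\equiv 0\pmod p.
\]
Thus every column of $\tilde{Q}$ lies in the kernel of $W^\T$ over $\mathbb{F}_p$, which under the hypothesis $\rank_p W=n-1$ is the one-dimensional line spanned by $z_0$. Consequently there exists a vector $c\in\mathbb{F}_p^n$ with $\tilde{Q}\equiv z_0 c^\T \pmod p$, so modulo $p$ the integer matrix $\tilde{Q}$ has the rank-one outer-product shape $z_0 c^\T$.

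The second ingredient is orthogonality: from $Q^\T Q=I$ one gets $\tilde{Q}^\T\tilde{Q}=\ell^2 I$, and because $p\mid\ell$ the right-hand side vanishes modulo $p$. Substituting the rank-one shape obtained above,
\[
0\equiv \tilde{Q}^\T\tilde{Q}\equiv c\,z_0^\T z_0\,c^\T = (z_0^\T z_0)\,cc^\T\pmod p.
\]
Since by hypothesis $z_0^\T z_0\not\equiv 0\pmod p$, this forces $cc^\T\equiv 0\pmod p$; inspecting the diagonal relations $c_i^2\equiv 0\pmod p$ then gives $c\equiv 0\pmod p$. Therefore $\tilde{Q}\equiv 0\pmod p$, so $(\ell/p)\,Q$ is an integral matrix. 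But $\ell/p<\ell$ contradicts the definition of $\ell(Q)$ as the smallest positive integer clearing all denominators of $Q$.

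Technically the argument is very short; the conceptual content sits entirely in the rank-one reduction $\tilde{Q}\equiv z_0 c^\T \pmod p$, which is where the rank hypothesis $\rank_p W=n-1$ is used. After that, the non-isotropy condition $z_0^\T z_0\not\equiv 0\pmod p$ is exactly what is needed to squeeze $c$ to zero, and I expect no further subtlety. It is worth noting that only the mod-$p$ version of $Q^\T Q=I$ is needed, so the argument does not require choosing an integral lift of $c$ or tracking higher-order information modulo $p^2$.
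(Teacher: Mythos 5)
Your proof is correct and uses exactly the argument one expects for this result: the paper itself only quotes the lemma from \cite{wang2006EUJC} without proof, and your reduction — every column of $\ell Q$ lies in the one-dimensional kernel of $W^\T$ over $\mathbb{F}_p$, so $\ell Q\equiv z_0c^\T\pmod p$, and then $\ell^2Q^\T Q\equiv 0\pmod p$ forces $(z_0^\T z_0)\,cc^\T\equiv 0$ and hence $c\equiv 0$, contradicting the minimality of $\ell$ — is the same totally-isotropic-column-space mechanism the paper reuses later in Lemma \ref{hatQ}. All steps check out, including the use of $Q^\T W(G)=W(H)$ being integral and the passage from $c_i^2\equiv 0$ to $c_i\equiv 0$ over the field $\mathbb{F}_p$.
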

For a monic polynomial $f\in \mathbb{F}_p[x]$ with standard factorzation $f = \prod_{1\le i\le r}f_i^{e_i}$. The \emph{squarefree part} of $f$, denoted by $\sfp (f)$,  is $\prod_{1\le i\le r}f_i$; see \cite[p.~394]{gathen}.
\begin{lemma}[Improved Condition  \cite{wang2023Eujc}]
		Let $G\in \mathcal{G}_n^c$ and $p$ be an odd prime such that  $p$ is a simple factor of $d_n(W)$, the last invariant factor of $W$. If
		\begin{equation}\label{keyequ2}
		\deg\sfp(\Phi_p(G;x))= \nullity_p W,
		\end{equation}  then $p\nmid\ell(Q)$ for any $Q\in \mathcal{Q}(G)$.
\end{lemma}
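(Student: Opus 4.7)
The plan is to argue by contradiction. Assume $p\mid \ell:=\ell(Q)$ for some $Q\in\mathcal{Q}(G)$, let $H$ be the graph with $Q^{\T}A(G)Q=A(H)$, and set $B:=\ell Q^{\T}\in\mathbb{Z}^{n\times n}$. Standard manipulations using $Q^{\T}Q=QQ^{\T}=I$, $Q^{\T}e=Qe=e$, and the intertwining $Q^{\T}A(G)Q=A(H)$ yield the integer identities $BB^{\T}=B^{\T}B=\ell^2 I$, $Be=B^{\T}e=\ell e$, $BA=A(H)B$, $B(A+J)=(A(H)+J)B$, and $BW(G)=\ell W(H)$. Since $p\mid\ell$, reducing mod $p$ (while crucially keeping $BB^{\T}$ mod $p^2$) imposes strong constraints on the mod-$p$ reduction of $B$.

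Let $U\subseteq\mathbb{F}_p^n$ denote the column space of $B^{\T}$ read mod $p$; by definition of $\ell$, $U\neq\{0\}$. The identities above show that $U$ is totally isotropic (since $BB^{\T}\equiv 0\pmod{p}$), simultaneously $A$- and $(A+J)$-invariant, and satisfies $\dim U\le \nullity_p W$ (since $BW(G)\equiv 0$ forces $\col W\subseteq\ker B$). Applying Lemma~\ref{pfts} separately to the symmetric matrices $A$ and $A+J$ decomposes $U$ along the multiple irreducible factors of $\chi(A;x)$ and of $\chi(A+J;x)$ respectively; in particular, every irreducible factor of $\chi(A|_U;x)$ is a multiple factor of $\chi(A;x)$ mod $p$, and likewise every irreducible factor of $\chi((A+J)|_U;x)$ is a multiple factor of $\chi(A+J;x)$ mod $p$.

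The two decompositions are linked by the observation that $(A+J)|_U-A|_U=J|_U$ has rank at most one. When $U\perp e$, $J|_U$ vanishes and $\chi(A|_U;x)=\chi((A+J)|_U;x)$, whose irreducible factors are common multiple factors of $\chi(A;x)$ and $\chi(A+J;x)$, hence divisors of $\Phi_p(G;x)$. A counting argument combining Lemma~\ref{dim}(\rmnum{2}) with the dimension bound $\dim U\le\nullity_p W=\deg\sfp(\Phi_p)$ should then force $U=\{0\}$. In the complementary case $J|_U\neq 0$, $J$-invariance of $U$ forces $e\in U$, and $A$-invariance then forces $\col W\subseteq U$, giving $\rank_p W\le\dim U\le\nullity_p W$; combined with the refined congruence $BB^{\T}\equiv 0\pmod{p^2}$ and the identity $BW(G)=\ell W(H)$, one lifts this to produce an integral vector $x\not\equiv 0\pmod p$ solving $Wx\equiv 0\pmod{p^2}$, contradicting Lemma~\ref{snfbasic}(\rmnum{3}) under the hypothesis $p^2\nmid d_n(W)$.

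The main obstacle is the bookkeeping in both cases. In the first, tracking the interaction of the primary decompositions for $A$ and $A+J$ on the same subspace $U$ via the rank-one perturbation $J|_U$, and then squeezing a strict inequality out of $\dim U \le \deg\sfp(\Phi_p)$, is delicate; in the second, producing an explicit lifted solution $x\not\equiv 0\pmod p$ of $Wx\equiv 0\pmod{p^2}$ from $BB^{\T}\equiv 0\pmod{p^2}$ and $e\in U$ is the technical heart, and is precisely where the hypothesis that $p$ is a simple factor of $d_n(W)$ becomes indispensable via Lemma~\ref{snfbasic}(\rmnum{3}).
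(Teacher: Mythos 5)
The paper does not actually prove this lemma --- it is imported verbatim from \cite{wang2023Eujc} --- so there is no internal proof to compare against; I can only assess your sketch on its merits. Your opening setup is correct and is exactly the machinery the paper deploys in Section~3: with $\hat Q=\ell Q$ (which is your $B^{\T}$), the space $U=\col(\hat Q)$ is nonzero mod $p$, totally isotropic, simultaneously $A$- and $(A+J)$-invariant (Lemma~\ref{hatQ}), and from $W(G)^{\T}\hat Q=\ell W(H)^{\T}\equiv 0\pmod p$ it lies in the null space of $W^{\T}$ over $\mathbb{F}_p$, so $\dim U\le \nullity_p W$. Up to that point everything is fine.

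The argument breaks down after that, in two ways. First, your case distinction on $J|_U$ is illusory: since $Q$ is orthogonal with $Qe=e$ we have $e^{\T}Q=e^{\T}$, hence $J\hat Q=\ell J\equiv 0\pmod p$, so $J$ annihilates $U$ identically and the branch ``$J|_U\neq 0$'' never occurs. That branch is the only place where you invoke the hypothesis that $p$ is a simple factor of $d_n(W)$ (via Lemma~\ref{snfbasic}~(\rmnum{3})), so in the branch that actually survives this hypothesis is never used --- yet the statement is certainly false without it, so no correct proof can avoid it. Second, in the surviving branch the announced ``counting argument'' is not supplied and, as stated, cannot force $U=\{0\}$: what you have established is only that every irreducible factor $\phi$ of $\chi(A|_U;x)$ is a multiple factor of $\Phi_p(G;x)$ (via Lemma~\ref{pfts}~(\rmnum{4}) applied to $A$ and to $A+J$) together with $\deg\phi\le\dim U\le\deg\sfp(\Phi_p(G;x))$ from Lemma~\ref{dim}~(\rmnum{2}). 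These constraints are perfectly consistent with $U\neq\{0\}$; for instance if $\Phi_p=\phi^2\psi$ with $\deg\phi=1$, $\deg\psi=2$, a one-dimensional $U$ with $\chi(A|_U;x)=\phi(x)$ violates nothing you have derived. The missing substance --- the actual content of \cite{wang2023Eujc} --- is the interaction between the equality $\deg\sfp(\Phi_p(G;x))=\nullity_p W$, the restriction of $A$ to the full null space $N$ of $W^{\T}$ over $\mathbb{F}_p$ (whose characteristic polynomial divides $\Phi_p(G;x)$ and has degree $\nullity_p W$), and the condition $p^2\nmid d_n(W)$, which together must be shown to exclude any nonzero totally isotropic $A$-invariant subspace of $N$. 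As written, your proposal records the standard reductions but omits the step that does the work.
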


\section{Proof of Theorem \ref{main}}
We prove Theorem \ref{main} by contradiction. Let $G$ be an $n$-vertex graph such that $\theta(G)$ (or equivalently $2^{-\lfloor\frac{n}{2}\rfloor}\det W$) is odd. Suppose to the contrary that $G$ is not $DGS$. Then, by Lemma \ref{dgslevel}, there exists a matrix $Q\in \mathcal{Q}(G)$ with level $\ell>1$. According to Lemma \ref{best} (\rmnum{5}), we find that $\ell$ is odd. Let $p$ be any odd prime factor of $\ell$ and we fix $p$ in the following argument. By  Lemma \ref{best} (\rmnum{1}), we know that $p$  divides $\det W$. Furthermore, it follows from Lemma \ref{best} (\rmnum{2}) that $p^2$ divides $\det W$. Similarly,  $p^2$ divides $\Delta$ by (\rmnum{3}) and (\rmnum{4}) of Lemma \ref{best}. Thus, $p^2\mid\theta(G)$.

Let $\hat{Q}=\ell\cdot Q$. We use $\col(\hat{Q})$  to denote the column space of $\hat{Q}$ over  $\mathbb{F}_p$, which is a subspace of $\mathbb{F}^n_p$.  
\begin{lemma}\label{hatQ}
	The subspace $\col(\hat{Q})\subset \mathbb{F}_p^n$ is nonzero,  totally isotropic, and  $(A+tJ)$-invariant for $t\in \{0,1\}$.
\end{lemma}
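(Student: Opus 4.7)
The proof is a short verification based directly on the defining properties of $Q\in\mathcal{Q}(G)$ and of the level $\ell$. I would prove the three assertions in the listed order.

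First, for the nonzero claim, the plan is to argue by minimality of $\ell=\ell(Q)$. If $\col(\hat{Q})$ were the zero subspace of $\mathbb{F}_p^n$, then every entry of $\hat{Q}=\ell Q$ would be divisible by $p$, so $(\ell/p)Q$ would still be an integral matrix. Since $p\mid\ell$, the integer $\ell/p$ is a strictly smaller positive integer clearing the denominators of $Q$, contradicting the definition of $\ell$ as the smallest such integer.

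Second, for total isotropy, I would use orthogonality of $Q$. From $Q^\T Q=I$ one gets $\hat{Q}^\T \hat{Q}=\ell^2 I$. Because $p\mid\ell$, we have $\ell^2\equiv 0\pmod p$, so $\hat{Q}^\T\hat{Q}\equiv 0\pmod p$. Interpreting this entry-wise says exactly that any two columns $u,v$ of $\hat{Q}$ satisfy $u^\T v\equiv 0\pmod p$, so every pair of vectors in $\col(\hat{Q})$ is orthogonal mod $p$; that is, $\col(\hat{Q})\subset \col(\hat{Q})^\perp$.

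Third, for $(A+tJ)$-invariance with $t\in\{0,1\}$, I would use the defining equation of $\mathcal{Q}(G)$ together with regularity. Since $Q\in\mathcal{Q}(G)$, the matrix $B:=Q^\T A Q$ is the (0,1)-adjacency matrix of a graph $H$, hence integral. From $Qe=e$ and $Q^\T Q=I$ we also obtain $Q^\T e=e$, whence $Q^\T J Q = (Q^\T e)(e^\T Q)=ee^\T=J$. Therefore $Q^\T(A+tJ)Q=B+tJ\in\mathbb{Z}^{n\times n}$, and multiplying on the left by $Q$ and scaling gives
\begin{equation*}
(A+tJ)\hat{Q}=\hat{Q}(B+tJ).
\end{equation*}
Since $B+tJ$ has integer entries, each column of $(A+tJ)\hat{Q}$ is an integral linear combination of the columns of $\hat{Q}$, so reducing modulo $p$ shows that $(A+tJ)$ maps $\col(\hat{Q})$ into itself.

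There is no real obstacle here; the only mildly delicate point is remembering that $Qe=e$ together with orthogonality forces $Q^\T e=e$, which is what makes the argument go through uniformly for both $t=0$ and $t=1$. The lemma is best viewed as a packaging step: it encodes all the algebraic data needed to apply the primary decomposition and isotropy lemmas of Section~2 to the subspace $\col(\hat{Q})$ in the subsequent proof of Theorem~\ref{main}.
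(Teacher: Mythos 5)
Your proposal is correct and follows essentially the same route as the paper: minimality of $\ell$ for nonzeroness, $\hat{Q}^\T\hat{Q}=\ell^2I\equiv 0\pmod p$ for total isotropy, and the conjugation relation for invariance. The only cosmetic difference is that you handle $t=0,1$ uniformly via $Q^\T JQ=J$, whereas the paper treats the $J$-part by observing $J\hat{Q}=\ell J\equiv 0\pmod p$; these are trivially equivalent.
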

\begin{proof}
	By the minimality of $\ell$, we see that $\hat{Q}\not\equiv 0\pmod p$ and hence $\col(\hat{Q})$ is nonzero. Let $\alpha=\hat{Q}c_1\in \mathbb{Z}^n$ and $\beta=\hat{Q}c_2\in \mathbb{Z}^n$ for any $c_1,c_2\in \mathbb{Z}^n$. As $\hat{Q}^\T \hat{Q}=\ell^2 I\equiv 0\pmod{p}$, we have $\alpha^\T\beta= c_1^\T\hat{Q}^\T \hat{Q}c_2\equiv 0\pmod{p}$. Thus,  $\col(\hat{Q})$ is totally isotropic. Finally, as $Q\in \mathcal{Q}(G)$, we have $Qe=e$ and $Q^\T AQ=B$, where $B$ is a (0,1)-matrix. Clearly $A\hat{Q}=\hat{Q}B$ over $\mathbb{Z}$ and hence over $\mathbb{F}_p$. Thus $\col(\hat{Q})$ is $A$-invariant. In addition, $J\hat{Q}=\ell J\equiv 0\pmod{p}$, i.e.,  the mapping $J$, restricted to $\col(\hat{Q})$, is zero. Consequently,  $\col(\hat{Q})$ is also $A+J$-invariant.
\end{proof}
Let	$\chi(A;x)=\phi_1^{r_1}(x)\phi_2^{r_2}(x)\cdots\phi_k^{r_k}(x)$ be the standard factorization of $\chi(A;x)$ over $\mathbb{F}_p$. It follows from Lemmas \ref{pfts} and \ref{hatQ} that
	\begin{equation}
	\col(\hat{Q})=\mathop{\oplus}\limits_{i\in I}(\col(\hat{Q})\cap\ker \phi_i^{r_i}(A)),
	\end{equation}
	where $I=\{i\colon\,r_i\ge 2\}$. As $\col(\hat{Q})$ is nontrivial, we see that $I$ is nonempty and  at least one subspace $\col(\hat{Q})\cap\ker \phi_i^{r_i}(A)$ is nonzero. For such an $i$, we claim further that  $\col(\hat{Q})\cap\ker \phi_i(A)$ is also nonzero.  Let $S=\col(\hat{Q})\cap\ker \phi_i^{r_i}(A)$. Then we have the following decreasing chain of $A$-invariant subspaces:
	\begin{equation}
	\{0\}\neq S\supset \phi_i(A)S\supset\phi_i^2(A)S\supset\cdots\supset \phi_i^{r_i}(A)S=\{0\}.
	\end{equation}
	Let $j$ be the largest index in $\{0,1,\ldots,r_i\}$ such that $\phi_i^j(A)S$ is nonzero. Then $j<r_i$, and $\phi_i^{j+1}(A)S=\{0\}$ or equivalently $\ker\phi_i(A)\supset \phi_i^j(A)S$. Noting that $\col(\hat{Q})\supset S\supset \phi_i^j(A)S$, we have 
	 $S=\col(\hat{Q})\cap\ker \phi_i(A)\supset \phi_i^j(A)S$ and the claim follows. This justifies the following definition.
\begin{definition}
Let $\phi(x)$  denote a fixed multiple irreducible factor of $\chi(A;x)$ over $\mathbb{F}_p$ such that $\col(\hat{Q})\cap\ker \phi(A)$ is nonzero.
\end{definition}
\begin{lemma}\label{alsomul}
The irreducible polynomial	$\phi(x)$ is a multiple factor of $\chi(A+J;x)$ and moreover $\col(\hat{Q})\cap\ker \phi(A)=\col(\hat{Q})\cap\ker \phi(A+J)$. 
\end{lemma}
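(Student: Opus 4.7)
The plan is to exploit the observation that the mapping $J$ acts as the zero operator on $\col(\hat{Q})$ over $\mathbb{F}_p$, so that $A$ and $A+J$ agree as linear transformations on this subspace. Once this is in hand, the equality of the two intersections is immediate, and the multiplicity assertion for $\chi(A+J;x)$ follows by applying Lemma \ref{pfts} to the symmetric matrix $A+J$. To establish the key observation, I would use that $Q$ is rational regular orthogonal: $Qe=e$ gives $Q^\T e = Q^\T(Qe)= e$, hence $JQ = ee^\T Q = e(Q^\T e)^\T = ee^\T = J$. Therefore $J\hat{Q} = \ell J \equiv 0 \pmod{p}$ because $p \mid \ell$, so $Jv \equiv 0 \pmod{p}$ for every column $v$ of $\hat{Q}$ and, by linearity, for every $v \in \col(\hat{Q})$.

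Next I would prove by induction on $k$ that $(A+J)^k v = A^k v$ in $\mathbb{F}_p^n$ for every $v \in \col(\hat{Q})$: assuming the identity for $k$, we have $A^k v \in \col(\hat{Q})$ by $A$-invariance, so $(A+J)^{k+1} v = (A+J)(A^k v) = A^{k+1} v + J(A^k v) = A^{k+1} v$. Consequently $f(A)v = f(A+J)v$ for every $f \in \mathbb{F}_p[x]$ and every $v \in \col(\hat{Q})$. Specializing to $f = \phi$ immediately gives the second assertion $\col(\hat{Q})\cap\ker \phi(A) = \col(\hat{Q})\cap\ker \phi(A+J)$ of the lemma.

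For the first assertion, set $S := \col(\hat{Q})\cap\ker\phi(A+J)$; by the hypothesis on $\phi$ and the equality just established, $S$ is nonzero. If $\phi$ did not divide $\chi(A+J;x)$, then $\phi$ would be coprime to $\chi(A+J;x)$ over $\mathbb{F}_p$, making $\phi(A+J)$ invertible (via a Bezout identity together with Cayley--Hamilton) and forcing $\ker\phi(A+J)=\{0\}$, a contradiction. So $\phi$ coincides with some irreducible factor in the standard factorization of $\chi(A+J;x)$. Since $A+J$ is symmetric and $\col(\hat{Q})$ is totally isotropic and $(A+J)$-invariant by Lemma \ref{hatQ}, Lemma \ref{pfts}(iii) applied to $A+J$ says that if $\phi$ were a simple factor of $\chi(A+J;x)$ then $\col(\hat{Q}) \cap \ker\phi(A+J) = \{0\}$, contradicting $S \neq \{0\}$. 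Hence $\phi$ is a multiple factor of $\chi(A+J;x)$. The only genuine insight is that the $J$-action vanishes on $\col(\hat{Q})$ modulo $p$; once this is recognized, the remainder is a direct invocation of the primary decomposition machinery already developed in the paper.
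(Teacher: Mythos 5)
Your proposal is correct and follows essentially the same route as the paper: $J$ annihilates $\col(\hat{Q})$ mod $p$, an induction shows $A$ and $A+J$ agree on that subspace so that $\phi(A)$ and $\phi(A+J)$ coincide there, divisibility follows from a Bezout identity plus Cayley--Hamilton, and multiplicity follows from Lemma \ref{pfts}(iii) applied to $A+J$. The only (harmless) difference is that you prove $f(A)v=f(A+J)v$ for all $v\in\col(\hat{Q})$ at once, which yields the set equality directly, whereas the paper proves one inclusion and invokes symmetry for the other.
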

\begin{proof}
	Let $T=\col(\hat{Q})\cap\ker \phi(A)$ and $\xi$ be any nonzero vector in $T$. Noting that $T$ is $A$-invariant, we have $A^m T\subset T\subset \col(\hat{Q})$ and in particular $A^m\xi\in \col(\hat{Q})$, for any nonnegative integer $m$. As $J\hat{Q}=\ell JQ=\ell J\equiv 0\pmod{p}$ and $A^m\xi\in \col(\hat{Q})$, we see that 	$JA^m\xi=0$  over $\mathbb{F}_p$ for any $m\ge 0$. From this fact and using a simple induction on $m$ we conclude that 
	$(A+J)^m\xi=A^m\xi$ for any $m\ge 0$. Therefore, $\phi(A+J)\xi=\phi(A)\xi$ and consequently $\phi(A+J)\xi=0$ as $\xi\in \ker \phi(A)$. As $\xi\in \col(\hat{Q})$ we see that	$\xi\in \col(\hat{Q})\cap\ker \phi(A+J)$ and hence 
	\begin{equation}
		\col(\hat{Q})\cap\ker \phi(A)\subset \col(\hat{Q})\cap\ker \phi(A+J)
		\end{equation}
	by the arbitrariness of $\xi$. The reversed inclusion relation can be proved using the same argument by interchanging the role of $A$ and $A+J$. This proves the latter part of this lemma.
	
To show the former part, we first claim that $\phi(x)\mid \chi(A+J;x)$. Otherwise, $\phi(x)$ and $\chi(A+J;x)$ would be coprime (as $\phi(x)$ is irreducible) and hence there exist two polynomials $u(x),v(x)\in \mathbb{F}_p[x]$ such that 
	$u(x)\phi(x)+v(x)\chi(A+J;x)=1$, which implies
	 $$u(A+J)\phi(A+J)\xi+v(A+J)\chi(A+J;A+J)\xi=\xi.$$
	 The first term in the sum is 0 as we have shown  $\phi(A+J)\xi=0$. By Caylay-Hamilton Theorem, we know $\chi(A+J;A+J)=0$ and hence the second term in the sum is also zero. Thus, $\xi=0$, contradicting our assmption on $\xi$. This shows the claim. 
	 
	 It remains to show that the factor $\phi(x)$ of $\chi(A+J;x)$  is not simple. Note that $\col(\hat{Q})$ is totally isotropic and $(A+J)$-invariant by Lemma \ref{hatQ}.  If  $\phi(x)$ is simple, then it follows from Lemma \ref{pfts} (\rmnum{3}) that 
	 \begin{equation}
	 \col(\hat{Q})\cap\ker \phi(A+J)=\{0\}.
	 \end{equation}
But  $\col(\hat{Q})\cap\ker \phi(A+J)$ equals $\col(\hat{Q})\cap\ker \phi(A)$, which is nonzero. This contradiction completes the proof.	 
\end{proof}
Recall that $\Phi_p(G;x)=\gcd(\chi(A;x),\chi(A+J),x)$ over $\mathbb{F}_p$. As $\phi(x)$ is multiple factor of both $ \chi(A;x)$ and $ \chi(A+J;x)$, we easily see that  $\phi(x)$ is a multiple factor of $\Phi_p(G;x)$. We record it as the following.
\begin{proposition}
	The polynomial $\phi(x)$ is a multiple irreducible factor of $\Phi_p(G;x)$.
\end{proposition}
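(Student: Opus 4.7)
The plan is to derive this as an immediate corollary of the previous work rather than through any new machinery. The key observation is that the notion of a ``multiple irreducible factor of $\Phi_p(G;x)$'' is, by the definition $\Phi_p(G;x)=\gcd(\chi(A;x),\chi(A+J;x))$ over $\mathbb{F}_p$, equivalent to being a common multiple irreducible factor of $\chi(A;x)$ and $\chi(A+J;x)$. So the entire task is to verify that $\phi(x)$ has multiplicity at least $2$ in each of these two characteristic polynomials separately.

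First I would invoke the definition of $\phi(x)$: by construction it is a multiple irreducible factor of $\chi(A;x)$ over $\mathbb{F}_p$, so $\phi(x)^2 \mid \chi(A;x)$. Next I would invoke Lemma~\ref{alsomul}, whose former part gives exactly the statement that $\phi(x)$ is also a multiple factor of $\chi(A+J;x)$ over $\mathbb{F}_p$, so $\phi(x)^2 \mid \chi(A+J;x)$ as well.

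Finally, since $\phi(x)$ is irreducible over $\mathbb{F}_p$ and divides both characteristic polynomials with multiplicity at least $2$, the standard properties of the greatest common divisor in the UFD $\mathbb{F}_p[x]$ yield
\begin{equation*}
\phi(x)^2 \,\Big|\, \gcd\bigl(\chi(A;x),\chi(A+J;x)\bigr) = \Phi_p(G;x),
\end{equation*}
which means exactly that $\phi(x)$ appears as a multiple irreducible factor in the standard factorization of $\Phi_p(G;x)$ over $\mathbb{F}_p$. There is no real obstacle here; all the substantive work has already been carried out in Lemma~\ref{alsomul}, and this proposition merely repackages its conclusion into the language needed to invoke hypothesis~\eqref{keyequ} of Theorem~\ref{main} in the remainder of the proof.
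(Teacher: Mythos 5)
Your proof is correct and follows the paper's own argument exactly: the paper also deduces the proposition immediately from the fact that $\phi(x)$ is by definition a multiple factor of $\chi(A;x)$ and, by Lemma~\ref{alsomul}, a multiple factor of $\chi(A+J;x)$, so that $\phi(x)^2$ divides their gcd. Nothing further is needed.
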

\begin{proposition}\label{strict} The following strict inequality hold:
\begin{equation}
\min\{\ord_p(\det \phi(A)),\ord_p(\det \phi(A+J))\}>\deg \phi(x).
\end{equation}
\end{proposition}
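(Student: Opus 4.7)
By Inequality \eqref{relex} the minimum is already $\ge\deg\phi$, so the task is strict inequality. The two sides are symmetric: $Q^T e=e$ forces $Q^T J Q=J$ (hence $Q^T(A+J)Q=B+J$), and Lemma \ref{alsomul} places $T$ simultaneously inside $\ker\phi(A+J)$, so it suffices to prove $\ord_p(\det\phi(A))>\deg\phi$, after which the $A+J$ side follows verbatim with $B+J$ replacing $B$. Set $t=\deg\phi$. From $T\subseteq\ker_p\phi(A)$ and Lemma \ref{dim}(\rmnum{2}), $\nullity_p\phi(A)\ge\dim T\ge t$. If this is strict, i.e.\ $\nullity_p\phi(A)>t$, then exactly as in the proof of Lemma \ref{Basicine} we get $\ord_p\det\phi(A)\ge\nullity_p\phi(A)>t$ at once.

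The substantive case is $\nullity_p\phi(A)=t$, which forces $T=\ker_p\phi(A)$. Here I plan to produce an integer vector $v'\not\equiv 0\pmod p$ with $\phi(A)v'\equiv 0\pmod{p^2}$; Lemma \ref{snfbasic}(\rmnum{3}) then yields $p^2\mid d_n(\phi(A))$, so that $\ord_p\det\phi(A)\ge(t-1)+2>t$. Taking a nonzero $v\in T$ with integer lift $v=\hat{Q}c$ and trying $v'=v+pw$, the requirement reduces to the linear congruence $\phi(A)w\equiv-\eta_v\pmod p$ with $\eta_v:=\phi(A)v/p\in\mathbb{Z}^n$; this is solvable precisely when $\eta_v$ lies in the image of $\phi(A)$ modulo $p$, which by the symmetry of $\phi(A)$ equals $(\ker_p\phi(A))^\perp=T^\perp$.

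The heart of the argument, and the step I expect to be the main obstacle to notice, is that $\eta_v$ always lies in $T^\perp$. This requires strengthening the mod-$p$ total isotropy of $\col(\hat{Q})$ to a mod-$p^2$ statement via the identity $\hat{Q}^T\hat{Q}=\ell^2 I$ with $\ord_p\ell\ge 1$. Concretely, for any $u=\hat{Q}c'\in T$,
\begin{equation*}
u^T\eta_v=(c')^T\hat{Q}^T\hat{Q}\,\phi(B)c/p=(\ell^2/p)\,(c')^T\phi(B)c\equiv 0\pmod p,
\end{equation*}
since $\ord_p(\ell^2/p)=2\ord_p\ell-1\ge 1$. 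Hence the desired lift $v'$ exists, which completes the $A$ side; the $A+J$ side is identical using $\hat{Q}^T\phi(A+J)=\phi(B+J)\hat{Q}^T$ in place of $\hat{Q}^T\phi(A)=\phi(B)\hat{Q}^T$.
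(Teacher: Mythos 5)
Your proposal is correct and follows essentially the same route as the paper: the same case split on $\nullity_p\phi(A)$, the same key computation that $\hat{Q}^\T\phi(A)\hat{Q}=\ell^2\phi(B)\equiv 0\pmod{p^2}$ forces $\phi(A)v/p$ to be orthogonal to $\ker_p\phi(A)$, and the same conclusion via a vector $v'\not\equiv 0\pmod p$ with $\phi(A)v'\equiv 0\pmod{p^2}$ and Lemma \ref{snfbasic}(\rmnum{3}). The only cosmetic difference is that you pass from orthogonality to solvability of $\phi(A)w\equiv-\eta_v\pmod p$ using $\textup{im}_p\phi(A)=(\ker_p\phi(A))^\perp$ directly, where the paper phrases the same fact as a rank computation on the augmented matrix $[\phi(A),\phi(A)\eta/p]$.
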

\begin{proof}
	We only prove $\ord_p(\det \phi(A))>\deg \phi(x)$ and the corresponding result for $A+J$ can be proved in the same way. Let $T=\col(\hat{Q})\cap\ker \phi(A)$.
	Since $T$ is  nonzero  and $A$-invariant, the dimension of $T$ is at least $\deg \phi(x)$ by Lemma \ref{dim}.  If  $\dim \ker \phi(A)>\deg \phi(x)$ then 
a similar argumment as in the proof of Lemma \ref{Basicine} shows that $\ord_p(\det \phi(A))\ge \dim \ker \phi(A)>\deg \phi(x)$. Thus we may assume $\dim \ker \phi(A)=\deg \phi(x)$. Consequently, $T=\ker \phi(A)$, i.e., $\ker \phi(A)\subset \col(\hat{Q})$.

Let $\xi,\eta$ be any vector in $\mathbb{Z}^n\setminus\{0\}$  such that $ \phi(A)\xi\equiv\phi(A)\eta\equiv 0\pmod{p}$. We claim 
\begin{equation}
 \xi^\T\phi(A)\eta\equiv 0\pmod{p^2}.
\end{equation}
As $\ker \phi(A)\subset \col(\hat{Q})$, we see that  $\xi,\eta\in \col(\hat{Q})$  over $\mathbb{F}_p$ and hence we can write
\begin{equation} 
\xi=\hat{Q}u+p\alpha,  \eta=\hat{Q}v+p\beta,  
\end{equation}
where $u,v,\alpha,\beta\in \mathbb{Z}^n$. Consequently, noting that $A$ is symmetric, we have
\begin{eqnarray}\label{modp2}
 \xi^\T\phi(A)\eta &=&(\hat{Q}u+p\alpha)^\T\phi(A)(\hat{Q}v+p\beta)\nonumber\\
&\equiv&u^\T(\hat{Q}^\T \phi(A)\hat{Q})v+p\alpha^\T\phi(A)\hat{Q}v+p\beta^\T\phi(A)\hat{Q}u\pmod{p^2}.
\end{eqnarray}
As  $p\mid \ell$ and $Q^\T A^m Q$ is integral for any $m\ge 0$, we easily see that $\hat{Q}^\T \phi(A)\hat{Q}=\ell^2 Q\phi(A)\hat{Q}\equiv 0\pmod{p^2}$. As $\ker \phi(A)\subset \col(\hat{Q})$, we obtain $\phi(A)(\hat{Q}v)\equiv 0\pmod{p}$ and hence $p\alpha^\T\phi(A)\hat{Q}v\equiv 0\pmod{p^2}$. Similarly, we also have
$p\beta^\T\phi(A)\hat{Q}u\equiv 0\pmod{p^2}$. It follows from Eq. \eqref{modp2} that  $\xi^\T\phi(A)\eta\equiv 0\pmod{p^2}$, as claimed.

Write $s=\dim \ker \phi(A)$ and let $\xi_1,\ldots,\xi_s$ be $s$ integral vectors such that $\xi_1,\ldots,\xi_s$ constitute a base of $\ker \phi(A)$ over $\mathbb{F}_p$. 
Then by the above claim, $\xi_i^\T\phi(A)\eta\equiv 0\pmod{p^2}$, i.e., $\xi_i^\T\frac{\phi(A)\eta}{p}\equiv 0\pmod{p}$ for $i=1,\ldots,s$. Noting that $\xi_i^\T\phi(A)=(\phi(A)\xi_i)^\T\equiv 0\pmod{p}$, we obtain
\begin{equation}
\begin{pmatrix}
\xi_1^\T\\
\xi_2^\T\\
\vdots\\
\xi_s^\T
\end{pmatrix} \left[\phi(A), \frac{\phi(A)\eta}{p}\right]\equiv 0\pmod{p}.
\end{equation}
Thus $\rank_p[\phi(A), \frac{\phi(A)\eta}{p} ]\le n-s$. Noting that  $\rank_p \phi(A)=n-s$, we  have $\rank_p[\phi(A), \frac{\phi(A)\eta}{p} ]=\rank_p \phi(A)$ and hence there exists an integral vector $\gamma$ such that
\begin{equation}
 \frac{\phi(A)\eta}{p}\equiv\phi(A)\gamma\pmod{p}.
\end{equation}
Multiplying $p$ on both sides generates  
\begin{equation}
\phi(A)(\eta-p\gamma)\equiv 0\pmod{p^2}.
\end{equation}
We may assume $\phi(A)$ has full rank over $\mathbb{Q}$ since otherwise $\ord_p\phi(A)=\infty$ and we are done. Let $d_i$ be the $i$-th invariant factor of $\phi(A)$ for $i=1,\ldots,n$. Since $\eta\not\equiv 0\pmod{p}$, we find that $p^2\mid d_n$ by Lemma \ref{snfbasic}. Moreover, as $\dim\ker \phi(A)=\deg\phi(x)=s$, the last $s$ invariant factors are multiple of $p$. Since $\det\phi(A)=\pm d_1\cdots d_n$, we find that $p^{s+1}\mid \det \phi(A)$, i.e., $\ord_p(\det\phi(A))>\deg\phi(x)$. This completes the proof.
\end{proof}
\begin{proof}[Proof of Theorem \ref{main}]
Proposition \ref{strict} contradicts Eq.~\eqref{keyequ} and hence completes the proof of Theorem \ref{main}.
\end{proof}
\section{Examples}
We first give an example for which the DGS-property can  be guaranteed by Theorem \ref{main}  while the previous criteria including Theorem \ref{old}, Exclusion Condition and Improved Condition all fail. We use Mathematica for the computation.

\noindent\textbf{Example 1} Let $n=12$ and $G$ be the graph with adjacency matrix
\begin{equation}
A=\begin{tiny}\left(
	\begin{array}{cccccccccccc}
	0 & 0 & 0 & 0 & 1 & 1 & 1 & 0 & 1 & 0 & 1 & 0 \\
	0 & 0 & 0 & 0 & 0 & 1 & 1 & 1 & 1 & 0 & 0 & 1 \\
	0 & 0 & 0 & 1 & 1 & 1 & 1 & 1 & 1 & 1 & 1 & 0 \\
	0 & 0 & 1 & 0 & 1 & 0 & 0 & 0 & 1 & 0 & 1 & 1 \\
	1 & 0 & 1 & 1 & 0 & 1 & 0 & 0 & 1 & 0 & 0 & 0 \\
	1 & 1 & 1 & 0 & 1 & 0 & 0 & 1 & 0 & 1 & 0 & 1 \\
	1 & 1 & 1 & 0 & 0 & 0 & 0 & 1 & 1 & 1 & 0 & 1 \\
	0 & 1 & 1 & 0 & 0 & 1 & 1 & 0 & 0 & 0 & 1 & 0 \\
	1 & 1 & 1 & 1 & 1 & 0 & 1 & 0 & 0 & 1 & 0 & 1 \\
	0 & 0 & 1 & 0 & 0 & 1 & 1 & 0 & 1 & 0 & 1 & 1 \\
	1 & 0 & 1 & 1 & 0 & 0 & 0 & 1 & 0 & 1 & 0 & 0 \\
	0 & 1 & 0 & 1 & 0 & 1 & 1 & 0 & 1 & 1 & 0 & 0 \\
	\end{array}
	\right).\end{tiny}
\end{equation}
It can be computed that
\begin{eqnarray*}
	\theta(G)&=&\gcd\{2^{-\lfloor\frac{n}{2}\rfloor}\det W,\Delta(G)\}\\
           	&=&\gcd\{-20514573,424319456090918385320095315960579067904\}\\
           	&=&3^3,
\end{eqnarray*}
which is odd but not squarefree. Thus, Theorem \ref{old} does not apply to this example.  In addition, as the SNF of $W(G)$ is 
\begin{equation}
\diag[\underbrace{1,1,1,1,1,1}_6,\underbrace{2,2,2,2,2\times 3,2\times 3^2\times759799}_6].
\end{equation}
we find that neither the Exclusion Condition nor the Improved Condition can be used to eliminate the possibility of $3\mid \ell(Q)$ for some $Q\in \mathcal{Q}(G)$. We turn to Theorem \ref{main}. Note that 
$\Phi_3(G;x)$ factors as $(x+1)^2$ and hence the only multiple irreducible factor of $\Phi_3(G;x)$ is $\phi(x)=x+1$. As
$\det (\phi(A))=\det(A+I)=-3\times 5^2$ and $\det (\phi(A+J))=\det(A+J+I)=-3^2\times 23$, we find that Eq. \eqref{keyequ} holds. Thus $G$ is DGS by Theorem \ref{main}.

Example 1 illustrates the main advantage of Theorem \ref{main}, compared to the Exclusion Condition and the Improved Condition.  We do not need the assumption that $\rank_p(W)=n-1$ or $p$ is a \emph{simple} factor $d_n(W)$. For the case that $p$ is a simple factor of $d_n(W)$, Theorem \ref{main} seems more powerful than the Improved Condition. We give a small example for illustration.

\noindent\textbf{Example 2} Let $n=10$ and $G$ be the graph with adjacency matrix
\begin{equation}
A=\begin{tiny}\left(
\begin{array}{cccccccccc}
0 & 0 & 1 & 0 & 1 & 0 & 1 & 0 & 1 & 0 \\
0 & 0 & 1 & 1 & 1 & 1 & 1 & 1 & 1 & 0 \\
1 & 1 & 0 & 0 & 0 & 0 & 1 & 0 & 1 & 1 \\
0 & 1 & 0 & 0 & 1 & 1 & 0 & 1 & 1 & 1 \\
1 & 1 & 0 & 1 & 0 & 0 & 0 & 0 & 0 & 0 \\
0 & 1 & 0 & 1 & 0 & 0 & 0 & 1 & 1 & 0 \\
1 & 1 & 1 & 0 & 0 & 0 & 0 & 1 & 1 & 0 \\
0 & 1 & 0 & 1 & 0 & 1 & 1 & 0 & 1 & 0 \\
1 & 1 & 1 & 1 & 0 & 1 & 1 & 1 & 0 & 0 \\
0 & 0 & 1 & 1 & 0 & 0 & 0 & 0 & 0 & 0 \\
\end{array}
\right).\end{tiny}
\end{equation}
The Smith normal form of $W$ is
\begin{equation}
\diag[\underbrace{1,1,1,1,1}_5,\underbrace{2, 2, 2\times 3, 2\times 3, 2\times 3\times 7\times 19}_5 ].
\end{equation}
As the last invariant is squarefree, we  try to use the Improved Condition to eliminate $p=3$ as a possible factor of $\ell(Q)$ for $Q\in \mathcal{Q}(G)$. 
It can be computed that $\Phi_3(G;x)=(x+1)^3$ and $\sfp(\Phi_3(G;x))=(x+1)$. Thus, $\deg \sfp(\Phi_3(G;x))=1<3=\nullity_p W$ and the Improved Condition fails.

As $\theta(G)=3^3$ and the only multiple irreducible factor of $\Phi_3(G;x)$ is $x+1$, we need to check Eq. \eqref{keyequ} for $p=3$ and $\phi(x)=x+1$. Indeed,
$\det(A+I)=6$, $\det(A+J+I)=12$ and hence
\begin{equation}
\min\{\ord_p(\det\phi(A)),\ord_p(\det\phi(A+J))\}=1=\deg \phi(x).
\end{equation}
Thus, $G$ is DGS by Theorem \ref{main}.

The following example shows that Theorem \ref{main} is tight in some sense, i.e., if the condition in Eq. \eqref{keyequ} fails, then $G$ may not be DGS.\\

\noindent\textbf{Example 3} Let $n=14$ and $G$ be the graph with adjacency matrix
\begin{equation}
A=\begin{tiny}\left(
\begin{array}{cccccccccccccc}
0 & 1 & 1 & 0 & 1 & 0 & 0 & 1 & 0 & 0 & 1 & 1 & 0 & 0 \\
1 & 0 & 1 & 1 & 1 & 0 & 0 & 0 & 1 & 0 & 0 & 0 & 0 & 1 \\
1 & 1 & 0 & 0 & 1 & 0 & 1 & 1 & 1 & 0 & 0 & 0 & 0 & 1 \\
0 & 1 & 0 & 0 & 1 & 1 & 0 & 1 & 0 & 1 & 1 & 0 & 1 & 1 \\
1 & 1 & 1 & 1 & 0 & 1 & 1 & 0 & 0 & 1 & 0 & 1 & 1 & 0 \\
0 & 0 & 0 & 1 & 1 & 0 & 1 & 1 & 0 & 0 & 1 & 0 & 1 & 1 \\
0 & 0 & 1 & 0 & 1 & 1 & 0 & 1 & 1 & 0 & 1 & 1 & 0 & 0 \\
1 & 0 & 1 & 1 & 0 & 1 & 1 & 0 & 1 & 1 & 0 & 0 & 0 & 1 \\
0 & 1 & 1 & 0 & 0 & 0 & 1 & 1 & 0 & 1 & 0 & 1 & 1 & 1 \\
0 & 0 & 0 & 1 & 1 & 0 & 0 & 1 & 1 & 0 & 1 & 1 & 0 & 1 \\
1 & 0 & 0 & 1 & 0 & 1 & 1 & 0 & 0 & 1 & 0 & 1 & 1 & 1 \\
1 & 0 & 0 & 0 & 1 & 0 & 1 & 0 & 1 & 1 & 1 & 0 & 1 & 1 \\
0 & 0 & 0 & 1 & 1 & 1 & 0 & 0 & 1 & 0 & 1 & 1 & 0 & 0 \\
0 & 1 & 1 & 1 & 0 & 1 & 0 & 1 & 1 & 1 & 1 & 1 & 0 & 0 \\
\end{array}
\right).\end{tiny}
\end{equation}
Direct computation indicates that $\theta(G)=3^8\times 5^2$ and the Smith normal form of $W$ is
\begin{equation}
\diag[\underbrace{1,\ldots, 1}_6,\underbrace{ 2, 2, 2, 2\times 3, 2\times 3, 2\times 3\times 5, 2\times 3^5\times 5^2\times 7\times 31\times 461\times 787}_6].
\end{equation}
We need to check Eq.~\eqref{keyequ} for each multiple prime factor of $\theta(G)$ and for each multiple irreducible factor of $\Phi_p(G)$. We summarize the procedure in Table \ref{check}.
\begin{table}[htbp]
	\footnotesize
	\centering
	\caption{\label{check}  Using Theorem \ref{main} for Example 3}
	\begin{threeparttable}
		\begin{tabular}{ccccc}
			\toprule
			$p$ &$\Phi_p(G;x)$&multiple irreducible factor of $\Phi_p(G;x)$ &whether Eq.~\eqref{keyequ} holds\\
			\midrule
			3  &$(x+1)^2 (x+2)^3$  &$x+1$ & F\\
			3  & $(x+1)^2 (x+2)^3$ &$x+2$ & T\\
			5  & $(x+2)^2$& $x+2$ &T\\
			\bottomrule
		\end{tabular}
	\end{threeparttable}
\end{table}
From Table \ref{check}, we can eliminate the possibility of $p\mid\ell(Q)$ for $p=5$ but not for $p=3$. Actually, let $Q$ be a rational orthogonal matrix as shown below.
Then $Q \in\mathcal{Q}(G)$  with level $\ell(G) = 3$, since it can be easily verified that $Q^\T AQ$ is a (0, 1)-matrix. Hence, $G$ is not DGS.
\begin{equation}
Q=\frac{1}{3}
\begin{tiny}\left(
\begin{array}{cccccccccccccc}
0 & 0 & 0 & 0 & 0 & 0 & 0 & 0 & 1 & 1 & 1 & 2 & -1 & -1 \\
3 & 0 & 0 & 0 & 0 & 0 & 0 & 0 & 0 & 0 & 0 & 0 & 0 & 0 \\
0 & 0 & 0 & 0 & 0 & 0 & 0 & 0 & 2 & -1 & -1 & 1 & 1 & 1 \\
0 & 3 & 0 & 0 & 0 & 0 & 0 & 0 & 0 & 0 & 0 & 0 & 0 & 0 \\
0 & 0 & 3 & 0 & 0 & 0 & 0 & 0 & 0 & 0 & 0 & 0 & 0 & 0 \\
0 & 0 & 0 & 3 & 0 & 0 & 0 & 0 & 0 & 0 & 0 & 0 & 0 & 0 \\
0 & 0 & 0 & 0 & 3 & 0 & 0 & 0 & 0 & 0 & 0 & 0 & 0 & 0 \\
0 & 0 & 0 & 0 & 0 & 3 & 0 & 0 & 0 & 0 & 0 & 0 & 0 & 0 \\
0 & 0 & 0 & 0 & 0 & 0 & 0 & 0 & 1 & 1 & 1 & -1 & 2 & -1 \\
0 & 0 & 0 & 0 & 0 & 0 & 3 & 0 & 0 & 0 & 0 & 0 & 0 & 0 \\
0 & 0 & 0 & 0 & 0 & 0 & 0 & 0 & 1 & 1 & 1 & -1 & -1 & 2 \\
0 & 0 & 0 & 0 & 0 & 0 & 0 & 3 & 0 & 0 & 0 & 0 & 0 & 0 \\
0 & 0 & 0 & 0 & 0 & 0 & 0 & 0 & -1 & 2 & -1 & 1 & 1 & 1 \\
0 & 0 & 0 & 0 & 0 & 0 & 0 & 0 & -1 & -1 & 2 & 1 & 1 & 1 \\
\end{array}
\right).\end{tiny}
\end{equation}
 
Table \ref{comp} gives some experimental results to see the effectiveness of Theorem \ref{main},  compared to Theorem \ref{old} (together with the Exclusion Condition and Improved Condition). Using Mathematica, for each $n \in \{10, 15, . . . , 50\}$, we randomly generate 10,000 graphs using the random graph model $G(n,p)$ with $p=1/2$. For example, for $n=50$, among 10,000 graphs generated in one experiment, 3409 graphs have  odd $\theta(G)$. For these graphs, 2780 graphs satisfy the condition of Theorem 1 and this number can be further increased to  2996 if the Exclusion Condition and/or the Improved Condition has been employed. The corresponding number is 3118 by using the newly obtained criterion  (Theorem \ref{main}).

\begin{table}[htbp]
	\footnotesize
	\centering
	\caption{\label{comp} Comparison between Theorem \ref{old} and Theorem \ref{main}}
	\begin{threeparttable}
		\begin{tabular}{ccccc}
			\toprule
			$n$ &\# graphs & \#DGS & \#DGS  &\#DGS\\
			(graph order) & (with $\theta(G)$ odd) &(by Theorem \ref{old})  & (by Theorem \ref{old}+EC+IC)\tnote{*} &(by Theorem \ref{main})\\
			\midrule
			10  &  3300 & 2964 &3082 &3124\\
			15  &  3402 & 2768 &2978 &3095\\
			20  &  3413 &2771 &2998&3131\\
			25  &  3415 &2785 &2986&3122\\
			30  &  3508 & 2851&3084&3225\\
			35  & 3457  &2803 &3023&3161\\
			40  & 3371 & 2721&2954&3086\\
			45  & 3383  &2774 &2979&3122\\
			50  & 3409  & 2780&2996&3118\\
			\bottomrule
		\end{tabular}
	\begin{tablenotes}
		\footnotesize
		\item[*]EC=Excusion Condition; IC=Improved Condition.
	\end{tablenotes}	
	\end{threeparttable}
\end{table}
	\section*{Acknowledgments}
%The author is grateful to the referee for his/her careful reading and valuable comments.
This work is partially supported by the National Natural Science Foundation of China (Grant Nos. 12001006, 11971406 and 11971376).

\end{document}